\documentclass[10pt]{article}
\usepackage{graphicx}
\usepackage{amsmath,amssymb,amsthm,amsfonts}
\usepackage{mathrsfs}
\usepackage{amssymb}

\usepackage{caption}
\usepackage{placeins}
\usepackage{booktabs}
\usepackage{psfrag,epsfig}
\usepackage{epstopdf}
\usepackage{color}
\usepackage{subcaption}

\newtheorem{thm}{Theorem}[section]

\newtheorem{lem}[thm]{Lemma}

\theoremstyle{definition}
\newtheorem{defn}{Definition}[section]
\theoremstyle{remark}
\newtheorem{rem}{Remark}[section]

\numberwithin{equation}{section}

\DeclareMathSymbol{\C}{\mathalpha}{AMSb}{"43}

\textwidth=5.8in \oddsidemargin=0.3in \evensidemargin=-0.0in

\textheight 8.3in

\newcommand{\lam}{\lambda}

\newcommand{\bsub}{\begin{subequations}}
\newcommand{\esub}{\end{subequations}$\!$}

\begin{document}

\title{Entropy production rate and time-reversibility \\for general jump diffusions on $\mathbb{R}^n$}

\author{ Qi Zhang\thanks{ Beijing institute of mathematical sciences and applications, Beijing, 101408 China. Email: \texttt{qzhang@bimsa.cn}}, Yubin Lu \thanks{Department of Applied Mathematics, Illinois Institute of Technology, Chicago, IL 60616 United States. \textit{Corresponding author}. Email: ylu117@illinoistech.edu}}


\smallbreak \maketitle

\begin{abstract}

This paper investigates the entropy production rate and time-reversibility for general jump diffusions (L\'{e}vy processes) on $\mathbb{R}^n$. We first formulate the entropy production rate and explore its associated thermodynamic relations for jump diffusions. Subsequently, we derive the entropy production rate using the relative entropy between the forward and time-reversed path measures for stationary jump diffusions via the Girsanov transform. Furthermore, we establish the equivalence among time-reversibility, zero entropy production rate, detailed balance condition, and the gradient structure for stationary jump diffusions.

\end{abstract}

\vskip 0.2truein

\noindent {\it Keywords:} Thermodynamics; entropy production; time-reversibility; jump diffusions; L\'{e}vy processes.

\vskip 0.2truein

Entropy is a fundamental concept in thermodynamics and information theory that measures the amount of disorder or uncertainty in a system. The entropy production rate describes how quickly entropy increases over time due to irreversible dynamics, reflecting the presence of dissipation or time-asymmetry in the system. Jump diffusion models are commonly used in stochastic modeling to describe systems that exhibit both continuous fluctuations and sudden, discontinuous changes, which cannot be captured by standard Brownian motion alone. In this paper, we extend the theory of entropy production rate from classical diffusion processes to general jump diffusions. We further establish the equivalence between time-reversibility, zero entropy production, and the detailed balance condition in this broader setting.
\section{Introduction}

The breaking of time-reversal symmetry is a fundamental feature for nonequilibrium process in thermodynamics.
The entropy production rate quantifies the degree of time-irreversibility, serving as a key concept for understanding systems far from thermal equilibrium. 
In general, without energy or material exchange, the process has time-reversibility and zero entropy production rate if it admits equilibrium state after relaxation time.
Moreover, Nicolis and Prigogine \cite{NP77} argue that when an open system is driven by sustained energy or material inputs from its environment, the process may reach a nonequilibrium steady state characterized by a positive entropy production rate.
In nonequilibrium thermodynamics, the entropy production rate can be represented as the product of the thermodynamics fluxes and forces, 
and it can be distinguished between two terms: the entropy change due to interactions with an external reservoir or environment, and the entropy generated within the system itself, see e.g. \cite{DM62, NP77}. 

Diffusion processes are essential stochastic models for various nonequilibrium phenomena, including polymer dynamics in fluids, asset price fluctuations in financial models, and the motion of biological motor molecules.
The formulation of entropy production and associated thermodynamics relations for diffusion processes are discussed by \cite{GQ10,VE10, TD12}. 
The equivalence between self-adjointness of the generator, the gradient structure of the drift, time-reversibility, and zero entropy production for a diffusion process is considered in detail in \cite{JQQ04,QQT02,MRV02}. Additionally, the entropy production rate of a stationary process can be derived from the relative entropy (Kullback-Leibler divergence) between the forward and time-reversed path measures, as demonstrated by Maes and Netočný \cite{MN03}, and Da Costa and Pavliotis \cite{DP23}. 
Lebowitz and Spohn \cite{LS99} established a path-space framework connecting entropy production to large deviation theory and fluctuation theorems.
Seifert \cite{S05} further investigated entropy production along single trajectory, establishing the associated integral fluctuation theorem. Similar principles apply to discrete-state Markov jump processes, which model systems such as molecular networks, chemical reactions, and transport on graphs (see e.g. \cite{EV10, MNW08}). Recently, Boffi and Vanden-Eijnden \cite{BV24} develop a deep learning framework via the entropy production rate to estimate the score in generative modeling.

In recent years, signatures of jump noise have been found ubiquitous in nature. As a result, jump diffusions have garnered significant attention for modeling phenomena such as DNA target search for binding sites \cite{SNG2011}, active transport within cells \cite{LVS2015}, and the price of risky assets \cite{DK08}.  

It is interesting to ask the question whether one can extend the results for entropy production rate and time-reversibility from classical diffusions to diffusions with nonlocal jumps. 
However, this extension is nontrivial due to the simultaneous presence of local diffusion driven by Brownian noises and nonlocal jump noises.
Recently, the time reversal of jump diffusions under the finite relative entropy condition is discussed by \cite{CL22}, and its application on score-based generative model are considered in \cite{YP23, GeneratorMatching}. The entropy production formula and fluctuation theorem for non-Gaussian stochastic dynamics with associated deep learning framework is studied by \cite{HLMZ25}.
Nevertheless, the literature on entropy production and time-reversibility for general jump diffusions (L\'{e}vy processes) on $\mathbb{R}^n$ remains limited. This paper aims to derive the entropy production rate formula for jump diffusions, and reveal the relationship between entropy production and the degree of time-irreversibility. It is a  generalization of some classical results for diffusion models and discrete Markov chains  \cite{QQT02, MN03}. 

We are interested in the jump diffusion $(X_t)_{t\geq 0}$ which is given by the following SDE on $\mathbb{R}^n$:
\begin{equation}\label{LeqX}
 dX_t =  b(X_{t})dt +  \sqrt{2\beta^{-1}} a(X_{t^-}) \circ dB_t  +  \int_{\mathbb{R}^n\setminus\{0\}} \sigma(X_{t^{-}},z) N(dt,dz),
\end{equation}
where $B_t$ is an $n$-dimensional Brownian motion on the probability space $(\Omega, \mathscr{F}, \mathbb{P})$ equipped with a filtration $(\mathscr{F}_t)_{t\geq 0}$, $\circ$ denotes the Stratonovich integral, the constant $\beta:=\theta^{-1}$ denote the inverse temperature, $b(\cdot)$ is a drift from $\mathbb{R}^n$ to $\mathbb{R}^n$, $a(\cdot)=(a_{ij}(\cdot))_{n\times n}:\mathbb{R}^n \rightarrow  \mathbb{R}^{n \times n}$ is the diffusion coefficient, $N(dtdz)$ be an independent $n$-dimensional Poisson random measure with Poisson intensity rate $\lambda>0$ and L\'{e}vy measure $\nu(dz)$ on $\mathbb{R}^n \setminus\{0\}$, and the function $\sigma:\mathbb{R}^n \times \mathbb{R}^n \rightarrow  \mathbb{R}^{n}$ is the coefficient of jump. We present some preliminary results for jump-diffusions $(X_t)_{t\geq 0}$, and list some necessary assumptions for the SDE \eqref{LeqX} in Section \ref{Sec2}.

In section \ref{Sec3}, we first derive the entropy production rate for the jump diffusion as
\begin{align}\label{ERP}
    e_p(t) = & \beta \int_{\mathbb{R}^n} (A^{-1}(x)b(x) - \beta^{-1} \nabla \log\rho(t,x))^{T}A(x)(A^{-1}(x)b(x) - \beta^{-1} \nabla \log\rho(t,x)) \rho(t,x)dx \nonumber \\
     &+\frac{1}{2} \int_{\mathbb{R}^n}\int_{\mathbb{R}^n} (\rho(t,x)k(x,y) - \rho(t,y)k(y,x))\log \left( \frac{\rho(t,x)k(x,y)}{\rho(t,y)k(y,x)} \right) dydx,
\end{align}
where the matrix $A(\cdot) = a^T(\cdot)a(\cdot)$ is a positive definite matrix or a zero matrix, $\rho(t,\cdot)$ is the density of $X_t$, and $k(x,y)$ denotes the jump rate from $x$ to $y$. {Indeed, the jump rate can be expressed in terms of the coefficients of the process \eqref{LeqX}, see the expression in \eqref{kernel}.}
Due to this formula, the entropy production rate $e_p(t)$ of the jump diffusion $(X_t)_{t \geq 0}$ is always non-negative.
Here, the entropy production rate $e_p(t)$ is the sum of the variation of entropy in the system, $dS(t)/dt$, and the heat dissipation, $-h_{d}(t)$. Based on this formula, we also discuss some thermodynamics relations, include the Clausius inequality and the free energy dissipation for jump diffusions.

We remark that another formula for the entropy production rate of jump diffusions is provided in \cite{HLMZ25}.
Unlike our formula, theirs retains the quadratic form of the entropy production rate for jump diffusions, similar to the case of Langevin equations, by rewriting the associated nonlocal Fokker-Planck equation into a local form of the continuity equation.
In contrast, in our formula \eqref{ERP}, the nonlocal term arises naturally from the pure jump component in the SDE \eqref{LeqX}, which makes its interpretation more straightforward. Our formula \eqref{ERP} is also consistent with the entropy production rate for discrete jump processes in \cite{MN03}.

The another goal of this paper is to investigate the relationship between the above entropy production rate  and time-reversibility of the stationary jump diffusions.
One can use the stationary Markov process to model a physical system in a steady state (including the equilibrium state and nonequilibrium steady state).
In probability theory, a stationary Markov process with an invariant measure $\mu$ is a time-reversal process if for every $f,g\in L^2(\mathbb{R}^n, \mu)$ and $t,s >0$,
\begin{equation*}
    \mathbb{E}^{\mu}f(X_t)g(X_s) = \mathbb{E}^{\mu}f(X_t)g(X_s).
\end{equation*} 
In Section \ref{Sec4}, we consider the time reversal formula of the jump diffusion $(X_t)_{t\geq 0}$ in Lemma \ref{TRformula}.
Then using the Girsanov transform for the stationary jump diffusion, we deduce the entropy production as the relative entropy between the forward path measure and time-reversed path measure for the stationary jump diffusion:
\begin{equation*}
    e^{ss}_p = \frac{1}{T}\mathcal{H}(\mathcal{P}_{[0,T]} | \mathcal{P}^R_{[0,T]}) := \frac{1}{T}\mathbb{E}^{\mathcal{P}_{[0,T]}} \left[ \log \left(\frac{\mathcal{P}^R_{[0,T]}}{\mathcal{P}_{[0,T]}}\right) \right],
\end{equation*}
see Theorem \ref{eprpath} in detail. This result reveals the connection between the entropy production rate of a stationary Markov process and the time-reversal asymmetry of its path measure $\mathcal{P}_{[0,T]}$.

In Section \ref{Sec5}, we show the equivalence between time-reversibility, zero entropy production, detailed balance, and the gradient structure for stationary jump diffusions in Theorem \ref{TSeq}. {This theorem shows that for general jump diffusions, whenever the stationary measure is not reversible for the dynamics, then the entropy production rate in the steady state is strictly positive. Thus this result is a criteria to determine the reversibility of general jump diffusions, and it is a generalization of \cite{QQT02,MRV02}.} Using this theorem, we construct some examples of time-reversible/time-irreversible stationary jump diffusions in Section \ref{Sec6}.

The paper ends with some summary and discussions in Section \ref{Sec7}.

\section{Jump-diffusions and nonlocal Fokker-Planck equations}\label{Sec2}

In this section, we recall some basic definitions and useful results for jump-diffusions. These materials can be found in some classical references for jump-diffusions or L\'{e}vy processes, including \cite{A2004, S99, K19} and the references therein.

Let $(\Omega, \mathscr{F}, (\mathscr{F}_t)_{t\geq 0}, \mathbb{P})$ be a filtered probability space that satisfies the usual hypotheses.
Let $B_t$ be an $n$-dimensional Brownian motion, and $L_{t}$ be an independent $n$-dimensional compound Poisson process with associated Poisson intensity rate $\lambda>0$ and L\'{e}vy measure (jump size distribution) $\nu(dz)$ on $\mathbb{R}^n \setminus\{0\}$. Then the compound Poisson process $L_t := \sum_{i=1}^{n_t}D_i$, where $n_t$ is the counting variable of a Poisson process with rate $\lambda$, and $\{D_i\}_{i\geq 1}$ are i.i.d. random variables with distribution $\nu(dz)$, which are also independent of $n_t$. Then the Poisson random measure $N(dt,dy)$ associated to the compound Poisson process $L_t$ is defined as
\begin{equation*}
    N(t,B)(\omega):= \sharp \{s\in [0,t]:\Delta L_{s}(\omega) \in B\}, \quad t\geq 0, B\in \mathscr{B}(\mathbb{R}^{n}\backslash \{0\}),
\end{equation*}
so that $\mathbb{E}N(dt,dz) = \lambda dt\nu(dz) = \lambda m(z)dtdz$. Then the compound Poisson process $L_t$ is also given by
\begin{equation*}
    L_{t} = \int_{0}^t\int_{\mathbb{R}^n \setminus \{0\}} z N(dt,dz).
\end{equation*}
In this paper, we assume that the L\'{e}vy measure $\nu(dz)=m(z)dz$ satisfies the integrability condition
\begin{equation}\label{integrability}
    \int_{\mathbb{R}^n \setminus\{0\}} 1\wedge |z|^2 \nu(dz) = \int_{\mathbb{R}^n \setminus\{0\}} 1\wedge |z|^2 m(z)dz < \infty.
\end{equation}
The above integrability condition implies that the sample paths of $L_t$ have a finite quadratic variation with finitely many large jumps with an amplitude larger than 1, almost surely.

Using the compensated Poisson random measure $\widetilde{N}(dt,dy)=N(dt,dy)- \lambda dt\nu(dy)$, the SDE \eqref{LeqX} is equivalent with
\begin{align}
 dX_t = & \left[ b(X_{t}) + \beta^{-1}\nabla \cdot A(X_t) + \lambda\int_{0< |z| \leq 1} \sigma(X_{t^-},z) \nu(dz) \right]dt + \sqrt{2\beta^{-1}} a(X_{t^-})dB_t \nonumber \\
 & \quad + \int_{0<|z| \leq 1} \sigma(X_{t^{-}},z)\widetilde{N}(dt,dz) + \int_{|z|>1} \sigma(X_{t^{-}},z) N(dt,dz),
\end{align}
where $A(x) = a(x)a^T(x):= (A_{ij}(x))$.

Now we make some assumptions for the SDE \eqref{LeqX}. All these assumptions in this section are assumed to hold in the sequel unless otherwise specified.
\\
{\bf (A) Lipschitz condition} For all $k \in \mathbb{N}$ and $ x_{1}, x_{2} \in  \mathbb{R}^n$ with $|x_1| \vee |x_2| \leq k$, there exists a constant  $c_1 >0$ such that
\begin{equation*}
     |b(x_1) - b(x_2)|^2+\|A(x_1)- A(x_2)\| + \int_{0 <|z| <1} |\sigma(x_1,z) - \sigma(x_2,z)|^2 \nu (dz) \leq c_1 |x_1-x_2|^{2}.
\end{equation*}
Here and below, $\|\cdot\|$ denotes the Hilbert-Schmidt norm of a matrix, and $|\cdot|$ denotes the Euclidean norm.\\
{\bf (B) Growth condition} For all $k \in \mathbb{N}$ and $ x\in  \mathbb{R}^n$, there exists a constant  $c_2 >0$ such that
\begin{equation*}
 |b(x)|^2 + \|A(x)\| + \int_{0 <|z| <1} |\sigma(x,z)|^2 \nu (dz) < c_2(1 + |x|^2).
\end{equation*}

Under above assumption, we have the following well-posedness result of the SDE \eqref{LeqX} (see e.g. \cite[Theorem 6.2.9]{A2004} or \cite[Theorem 3.3.1]{K19}).

\begin{thm}\label{ExD}
Suppose that $({\bf A})$ and $({\bf B})$ hold, and impose the standard initial condition. Then the SDE \eqref{LeqX} admits a unique global strong solution $(X_t)_{t\geq 0}$.
\end{thm}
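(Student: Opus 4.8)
The plan is to start from the It\^{o} form of \eqref{LeqX} displayed above and to combine an interlacing argument for the large jumps with localization and Picard iteration for the remaining equation. First I would isolate the large-jump term $\int_{|z|>1}\sigma(X_{t^-},z)\,N(dt,dz)$: the jumps of $L_t$ of size exceeding $1$ occur at the finite rate $\lambda\,\nu(\{|z|>1\})$, so their jump times $0<\tau_1<\tau_2<\cdots$ are almost surely discrete with $\tau_k\to\infty$. It therefore suffices to solve, on each interval $[\tau_{k-1},\tau_k)$, the equation driven only by $B_t$ and the compensated small-jump measure $\widetilde{N}(dt,dz)$ on $\{0<|z|\le 1\}$, with modified drift $\bar b(x)=b(x)+\beta^{-1}\nabla\cdot A(x)+\int_{0<|z|\le 1}\sigma(x,z)\,\nu(dz)$, and then to append the jump $\sigma(X_{\tau_k^-},D_k)$ at each $\tau_k$. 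One checks using $({\bf A})$, $({\bf B})$ and the Cauchy--Schwarz inequality that $\bar b$ is still locally Lipschitz and of linear growth, and that the compensated small-jump integral is a genuine $L^2$-martingale.

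For this reduced equation I would truncate the coefficients outside the ball $\{|x|\le m\}$ so that they become globally Lipschitz and bounded, and solve the truncated equation by Picard iteration, setting $X^{(0)}_t\equiv X_0$ and
\[
X^{(j+1)}_t = X_0+\int_0^t\bar b_m(X^{(j)}_s)\,ds+\sqrt{2\beta^{-1}}\int_0^t a_m(X^{(j)}_{s^-})\,dB_s+\int_0^t\!\!\int_{0<|z|\le 1}\sigma_m(X^{(j)}_{s^-},z)\,\widetilde{N}(ds,dz).
\]
Doob's $L^2$-maximal inequality together with the It\^{o} isometries for the Brownian and compensated-Poisson integrals (the latter contributing $\mathbb{E}\int_0^t\!\int_{0<|z|\le1}|\sigma_m|^2\,\nu(dz)\,ds$) and the Lipschitz bound give
\[
\mathbb{E}\Big[\sup_{s\le t}\big|X^{(j+1)}_s-X^{(j)}_s\big|^2\Big]\le C\int_0^t\mathbb{E}\Big[\sup_{r\le s}\big|X^{(j)}_r-X^{(j-1)}_r\big|^2\Big]\,ds,
\]
so that $(X^{(j)})_j$ is Cauchy in $L^2(\Omega; D([0,T];\mathbb{R}^n))$ and its limit solves the truncated equation; the same Gronwall-type estimate applied to the difference of two solutions yields pathwise uniqueness. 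The truncated solutions coincide up to the exit time $\zeta_m=\inf\{t:|X_t|\ge m\}$, so they paste together into a solution on the stochastic interval $[0,\zeta_\infty)$ with $\zeta_\infty=\lim_m\zeta_m$.

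It remains to exclude explosion. Applying It\^{o}'s formula to $|X_t|^2$ and invoking the linear growth condition $({\bf B})$ gives, uniformly in $m$,
\[
\mathbb{E}\Big[\sup_{s\le t\wedge\zeta_m}|X_s|^2\Big]\le C_T\big(1+\mathbb{E}|X_0|^2\big)e^{C_T t},
\]
so that Gronwall's lemma forces $\zeta_\infty=\infty$ almost surely. Reinstating the large jumps by interlacing over the intervals $[\tau_{k-1},\tau_k)$ and using $\tau_k\to\infty$ then produces the unique global strong solution. I expect the main obstacle to be the careful bookkeeping in the moment estimates for the mixed Brownian/jump noise --- in particular controlling the interaction of the three integrals in a single Gronwall argument and verifying that the Stratonovich-to-It\^{o} correction $\beta^{-1}\nabla\cdot A+\int_{0<|z|\le1}\sigma\,\nu(dz)$ inherits the structural bounds; once these are in place the scheme is the classical one, and the statement is exactly \cite[Theorem 6.2.9]{A2004} (equivalently \cite[Theorem 3.3.1]{K19}).
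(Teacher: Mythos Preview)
Your sketch is correct and follows precisely the classical interlacing/localization/Picard scheme of \cite[Theorem 6.2.9]{A2004} and \cite[Theorem 3.3.1]{K19}; the paper itself gives no proof and simply cites those two references, so your approach is by construction the same as theirs.
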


The solution $(X_t)_{t\geq 0}$ has the Markov semigroup $(P_t)_{t\geq 0}$, which is given by
\begin{equation*}
    P_t f(x)=\mathbb{E}(f(X_t)|X_0 =x), \quad \forall f \in L^1(\mathbb{R}^n, \mu).
\end{equation*}
We also denote the generator of the Markov process $(X_t)_{t\geq 0}$ by $\mathcal{L}$, i.e.
\begin{equation*}
    \mathcal{L}f = \lim_{t \rightarrow 0} \frac{1}{t}(P_t f - f),\quad \forall f \in L^1(\mathbb{R}^n, \mu).
\end{equation*}
Then the jump diffusion $X_t$ has generator
\begin{align*}
    \mathcal{L}f(x) = & b(x) \cdot\nabla f(x) + \beta^{-1} \nabla\cdot (A(x)\nabla f(x)) + \lambda\int_{\mathbb{R}^n \setminus \{ 0 \} } \left[ f(x+ \sigma(x,z)) - f(x)\right] d\nu(z).
\end{align*}
{\bf (C) Boundedness condition} the Jacobian matrix $\nabla_z \sigma(x,z)$ is non-degenerate for all $x,y \in \mathbb{R}^n$, and there exists a constant $c_2>0$ such that $\|\nabla_z \sigma(x,z)\| < c_2$ for all $x,y \in \mathbb{R}^n$.

Under the above boundedness condition, for every $x \in \mathbb{R}^n$, the map $z \mapsto \sigma(x,z)$ is a $C^1$-diffeomorphism, and admits an inverse denoted by $\sigma^{-1}(x,z)$. By the change of variables, the generator can also be rewritten as
\begin{align*}
    \mathcal{L}f(x) = & b(x) \cdot\nabla f(x) + \beta^{-1} \nabla\cdot (A(x)\nabla f(x))  \\
      & + \lambda \int_{\mathbb{R}^{n}\setminus \{0\}} [f(x+z)- f(x)] \det(\nabla_{z}\sigma^{-1}(x,z))m(\sigma^{-1}(x,z))dz \\
    = & b(x) \cdot\nabla f(x) + \beta^{-1} \nabla\cdot (A(x)\nabla f(x)) +   \int_{\mathbb{R}^{n}\setminus \{x\}} [f(y)-f(x)] k(x,y)dy,
\end{align*}
where 
{
\begin{equation}\label{kernel}
    k(x,y) = \lambda m(\sigma^{-1}(x,y- x))\det(\nabla_{y-x}\sigma^{-1}(x,y-x))
\end{equation}
 }
is the jump kernel of $X_t$. Note that $k(x,y)dtdy$ is a measure on $\mathbb{R}^{+}\times(\mathbb{R}^n\setminus\{0\})$ associated with the pure jump part $\Delta X_t = X_t - X_{t^{-}}$ of $(X_t)_{t\geq0}$. Now we introduce the Poisson random measure 
\begin{align*}
    N_X(t,B)(\omega):= \sharp \{s\in [0,t]:\Delta X_{s}(\omega) \in B\}, \quad t\geq 0, B\in \mathscr{B}(\mathbb{R}^{n}\backslash \{0\})
\end{align*}
so that $N_X(dt,dz):= \det(\nabla_z \sigma^{-1}(X_{t^{-}},z))N(dt, d\sigma^{-1}(X_{t^{-}},z))$. Then the associated compensated Poisson random measure 
\begin{align*}
    \widetilde{N}_X(dt,dz) = & N_X(dt,dz) - \lam\det(\nabla_z \sigma^{-1}(X_{t^{-}}, z))m(\sigma^{-1}(X_{t^{-}},z))dzdt \\
    = & N_X(dt,dz) -k(X_{t^{-}},X_{t^{-}} +z)dzdt
\end{align*}
generates a local martingales.
Using the Poisson random measure $N_X(dt,dz)$, we can rewrite
\begin{equation*}
    \int_{|z|>0} \sigma(X_{t^-},z) N(dt,dz) \overset{\text d}{=} \int_{|z|>0} z N_X(dt,dz),
\end{equation*}
and the SDE \eqref{LeqX} is also equivalent in distribution sense with
\begin{equation*}
     dX_t =  b(X_{t})dt +  \sqrt{2\beta^{-1}} a(X_{t^-}) \circ dB_t  + \int_{|z|>0} z N_X(dt,dz).
\end{equation*}

The existence of a smooth transition density of Markov processes with jumps has been investigated by Malliavin calculus, see e.g. Picard \cite{P96}, and Kunita \cite[Chapter 6]{K19}.
Moreover, for each distribution $\mu^{X_t}(dx)$ of $X_t$, its probability density function $\rho(x,t)$ satisfies the nonlocal Fokker-Planck equation
\begin{equation}\label{nonlocalFk}
    \partial_t \rho(x,t) = \mathcal{L}^{\ast}\rho(x,t),
\end{equation}
where $\mathcal{L}^{\ast}$ is the adjoint operator of the generator $\mathcal{L}$, which is given by
\begin{equation*}
    (\mathcal{L}^{\ast}\rho)(x) = \nabla\cdot [-b(x)\rho(x) + \beta^{-1} A(x)\nabla \rho(x)] +\int_{\mathbb{R}^n \setminus \{ x \} }k(y,x)\rho(y) - k(x,y)\rho(x)dy.
\end{equation*}

Similar with the local Fokker-Planck equation, we define the local probability current
\begin{equation}
    j^{loc}(t,x) := b(x)\rho(x) - \beta^{-1} A(x)\nabla \rho(x).
\end{equation}
Since there is a nonlocal jump part in the operator $\mathcal{L}^{\ast}$, for every $x \neq y$, we also define the nonlocal probability current between $x$ and $y$ as
\begin{equation}
    j^{nl}(t,x,y) = \rho(t,x)k(x,y) - \rho(t,y)k(y,x).
\end{equation}
By the local probability current $j^{loc}(t,x)$ and the nonlocal probability current $j^{nl}(t,x,y)$, we can rewrite the Fokker-Planck equation \eqref{nonlocalFk} as
\begin{equation*}
    \partial_t \rho(x,t) + \nabla\cdot j^{loc}(t,x) + \int_{\mathbb{R}^n \setminus \{x\}} j^{nl}(t,x,y) dy=0.
\end{equation*}
By the local probability current $j^{loc}(t,x)$ and the nonlocal probability current $j^{nl}(t,x,y)$, we introduce the definition of detailed balance for the jump diffusion.

\begin{defn}[Detailed balance]
The jump diffusions $(X_t)_{t\geq 0}$ satisfy the detailed balance condition if for any $x,y \in \mathbb{R}^n$ and $t>0$, its local probability current $j^{loc}(t,x)$ and the nonlocal probability current $j^{nl}(t,x,y)$ satisfy
\begin{equation*}
    j^{loc}(t,x)=0, \text{ and }  j^{nl}(t,x,y) = 0.
\end{equation*}
\end{defn}

In thermodynamics theory, the process is at equilibrium state if the process satisfies the detailed balance condition.

A probability measure $\mu$ is said to be an invariant measure of Markov semigroup $P_t$ if for every $f \in \mathcal{B}_b(\mathscr{B}(\mathbb{R}^n))$,
\begin{equation*}
    \int_{\mathbb{R}^n} P_t f(x) \mu(dx) = \int_{\mathbb{R}^n} f(x) \mu(dx).
\end{equation*}
Furthermore, if the invariant measure $\mu$ has a $C^2$ density $\rho_{ss}$, then $\rho_{ss}$ satisfies the stationary Fokker-Planck equation $\mathcal{L}^{\ast}\rho_{ss} = 0$.

In order to show the existence of invariant measure for SDE \eqref{LeqX}, we need the following Lyapunov condition of the generator $\mathcal{L}$.\\
{\bf (D) Lyapunov condition.} There exists a positive function $w_1 \in C^2(\mathbb{R}^n)$, and a positive compact function $w_2 \geq c_6|x|^p$ for some constant $c_6>0$ and $p \geq 1$, so that $\mathcal{L}w_1 \leq c_5 - w_2$ holds for some constant $c_5>0$.\\
{We remark that the Lyapunov condition is satisfied if
\begin{equation*}
    \lim_{x\to \infty} [b(x) \cdot x + \| A(x) \| + \int_{z \neq 0 } (|\sigma(x,z)|^2 \wedge 1) dz] < 0.
\end{equation*}
Thus as in Example 1 and Example 2 in Section 6, we can choose the drift $b$ to satisfy some dissipative condition to ensure that the Lyapunov condition holds.}
Under Lyapunov condition with $w_2$, the solution has the following uniform moment estimate $$\sup_{t\geq0}\mathbb{E}|X_t|^p < c^{-1}_{6} \sup_{t\geq0}\mathbb{E}w_2(X_t) < \infty.$$ 
The associated semigroup of SDE \eqref{LeqX} has an invariant probability measure $\mu$, see e.g. \cite{H80}. In physics, the process $(X_t)_{t\geq 0}$ is stationary with its invariant measure $\mu(dx)$ means that the corresponding dynamic is in a steady state (including the equilibrium state and nonequilibrium steady state).

We remark that under some dissipative condition of drift $b$, the generator $\mathcal{L}$ satisfies the Lyapunov condition. In \cite{XZ20}, the Lipschitz condition and Growth condition can be relaxed to local H\"{o}lder continuous condition and local growth condition with some dissipative drift $b$. Some explicit invariant measures of jump diffusion are constructed in \cite{ARW00}. 

Finally, in order to define the entropy production rate in next section, we also need the following assumption for the jump diffusion.\\

{\bf (E) Regularity condition.} For every $t \geq 0$, we assume that $\nabla\log \rho(t,\cdot)$ is $\mu^{X_t}-$a.s. bounded, where $\mu^{X_t}$ is the distribution of $X_t$. We also suppose that the jump kernel $k(x,y)$ satisfies that for every $t>0$,
\begin{equation}\label{ratiolim}
    \bar{k}(x,y):= k(x,y)\log \frac{k(x,y)}{k(y,x)} \in L^{\infty}(\mu^{X_t}(dx)dy),
\end{equation}
where $L^{\infty}(\mu^{X_t}(dx)dy)$ is the $L^{\infty}$ on the measure space $(\mathbb{R}^n\times \mathbb{R}^n,\mathscr{B}(\mathbb{R}^n\times \mathbb{R}^n), \mu^{X_t}(dx)dy)$. We remark that the condition \eqref{ratiolim} is proposed to ensure that the entropy production rate presented in Section~\ref{Sec3} is well-defined, see Remark~\ref{rmk:integrability_1} below for more details.
\begin{rem}
    Regularity condition (E) assumes that $\nabla\log \rho(t,\cdot)$ is bounded for every $t \geq 0$. A natural question is which systems ensure this boundedness, or how restrictive this condition is. While this remains an open question in the general case, it is worth noting that $\nabla\log \rho(t,\cdot)$ is known as {\it score} function in the machine learning community, particularly in the generative model community. For some partial answers, we refer the interested reader to \cite{DiffModelBound} and the references therein.
\end{rem}



\section{Entropy production rate from thermodynamics relations}\label{Sec3}

In order to give the entropy production rate formula for jump diffusions, we first define some important concepts from thermodynamics theory for jump diffusions, including the Gibbs entropy, internal energy, work, and heat dissipation in this section.

The Gibbs entropy of the jump diffusion $(X_t)_{t \geq 0}$ is 
\begin{equation*}
    S(t) = -\int_{\mathbb{R}^n}\rho(t,x)\log \rho(t,x) dx,
\end{equation*}
where $\rho(t,\cdot)$ is the probability density of $X_t$.

By Helmholtz-Hodge decomposition, the vector field can be decomposed as $$A^{-1}(x)b(x) = -\nabla V(x) + f^{loc}(x),$$ where $-\nabla V$ is the internal force from the potential $V$, and the divergence-free vector field $f^{loc}$ is the external force from outside the system. 
This decomposition for diffusion processes was well studied by \cite{A04, Q13}.
Moreover, motivated by the case of the discrete jump processes in \cite{MNW08}, we also decompose the jump kernel as
\begin{equation*}
    \frac{k(x,y)}{k(y,x)} = e^{\beta(f^{nl}(x,y) + V(x) - V(y))},
\end{equation*}
where $V(x) - V(y)$ is the difference in potential between each $x$ and $y$, and $f^{nl}(x,y) = -f^{nl}(y,x)$ is the external driving to the nonlocal transitions between each $x$ and $y$.

Since the diffusion $X_t$ is moving in the potential $V(x)$, the total internal energy of $X_t$ is given by
\begin{equation*}
    U(t):= \int_{\mathbb{R}^n} V(x) \rho(t,x) dx = \mathbb{E}V(X_t).
\end{equation*}
Recall that the work $\delta W = \text{force} \times \delta x$, where $x$ is the position, $\delta$ is the variation or derivative in general senses.
Then based on the probability currents $j^{loc}(t,x) $, and $j^{nl}(t,x,y)$, the change of total work $dW(t)/dt$ done by the external force $f^{loc}(x)$ and external driving $f^{nl}(x,y)$ is given by:
\begin{align}
   \frac{dW(t)}{dt} = & \int_{\mathbb{R}^n} f^{loc}(x)\cdot j^{loc}(t,x)dx + {\frac{1}{2}}\int_{\mathbb{R}^n}\int_{\mathbb{R}^n \setminus \{x\}} j^{nl}(t,x,y)f^{nl}(x,y)dydx.
\end{align}
The change of internal energy is 
\begin{align}
    \frac{d U(t)}{dt} = & \int_{\mathbb{R}^n} V(x) \mathcal{L}^{\ast} \rho(t,x)dx \nonumber\\
    = & -\int_{\mathbb{R}^n} V(x) \nabla\cdot j^{loc}(t,x)dx - \int_{\mathbb{R}^n}  V(x)\int_{\mathbb{R}^n \setminus \{x\}} j^{nl}(t,x,y) dy dx \nonumber \\
             = & \int_{\mathbb{R}^n} \nabla V(x) \cdot j^{loc}(t,x)dx - \frac{1}{2}\int_{\mathbb{R}^n} \int_{\mathbb{R}^n \setminus \{x\}} j^{nl}(t,x,y)  [V(x) - V(y)] dydx.
\end{align}
By the first law of thermodynamics, the infinitesimal heat exchange {due to the internal energy flows from the system to the reservoir and work done by the external force} at time $t$ is given by $dQ(t) =  dU(t) - dW(t).$ Thus the heat dissipation $h_{d}(t)$ of $X_t$ is given by
\begin{align}\label{HDr}
    h_{d}(t) = & \beta\frac{dQ(t)}{dt}\nonumber\\
             = & \beta\frac{d U(t)}{dt} - \beta \frac{dW(t)}{dt} \nonumber\\
             = & \beta\int_{\mathbb{R}^n} (\nabla V(x) - f^{loc}(x)) \cdot j^{loc}(t,x)dx \nonumber\\
             & - \frac{\beta}{2}\int_{\mathbb{R}^n} \int_{\mathbb{R}^n \setminus \{x\}} j^{nl}(t,x,y)  [f^{nl}(x,y) + V(x) - V(y)] dydx \nonumber \\
             = & -\beta\int_{\mathbb{R}^n} b^T(x) A^{-1}(x)\cdot j^{loc}(t,x)dx - \frac{1}{2}\int_{\mathbb{R}^n}\int_{\mathbb{R}^n \setminus \{x\}} j^{nl}(t,x,y)\log \left( \frac{k(x,y)}{k(y,x)} \right) dydx .
\end{align}
On the other hand, the variation of entropy in the system is
\begin{align}\label{Dentropy}
   \frac{dS(t)}{dt}  = & -\frac{d}{dt} \int_{\mathbb{R}^n}\rho(t,x)\log \rho(t,x) dx \nonumber \\
                  = & -\int_{\mathbb{R}^n} \partial_t \rho(t,x) (\log \rho(t,x) +1) dx \nonumber\\
                  = & - \int_{\mathbb{R}^n} \mathcal{L}^{\ast}\rho(t,x) (\log \rho(t,x) +1) dx \nonumber\\
                  = & -\int_{\mathbb{R}^n} j^{loc}(t,x)\nabla \log \rho(t,x)dx  +\frac{1}{2} \int_{\mathbb{R}^n}\int_{\mathbb{R}^n \setminus \{x\}} j^{nl}(t,x,y)\log \left( \frac{\rho(t,x)}{\rho(t,y)} \right) dydx \nonumber\\
                   = &  \beta\int_{\mathbb{R}^n}  \frac{(j^{loc}(t,x))^TA^{-1}(x)j^{loc}(t,x)}{\rho(t,x)} dx - \beta \int_{\mathbb{R}^n} b(x)^T A^{-1}(x) j^{loc}(t,x) dx \nonumber \\
                   & +\frac{1}{2} \int_{\mathbb{R}^n}\int_{\mathbb{R}^n \setminus \{x\}}j^{nl}(t,x,y)\log \left( \frac{\rho(t,x)k(x,y)}{\rho(t,y)k(y,x)} \right) dydx \nonumber\\
                   & -\frac{1}{2} \int_{\mathbb{R}^n}\int_{\mathbb{R}^n \setminus \{x\}} j^{nl}(t,x,y)\log \left( \frac{k(x,y)}{k(y,x)} \right) dydx.
\end{align}

\begin{rem}\label{rmk:integrability_1}
    The assumption ({\bf (E)}) ensures that the integrals in the last two line of \eqref{Dentropy} are well-defined, since that the condition~\eqref{ratiolim} for kernel $k(x,y)$ and the nonlocal flux $j^{nl}$ decay as $x$ and $y$ tend to infinity.
\end{rem}
\begin{rem}
    We point out that the energy $U$, heat $Q$, work $W$, and entropy production as defined in this paper are in fact the average energy, average heat, average work, and average entropy production. That is different to the stochastic work and heat along a single trajectory, see for example \cite{stochasticQuantity}.
\end{rem}

The total variation of entropy $e_p(t)$ can be written as the sum of the variation of entropy in the system, $dS(t)/dt$, and the variation of entropy in the reservoir, $-h_{d}(t)$. Combining \eqref{HDr} with and \eqref{Dentropy}, now we define entropy production rate formula for the jump diffusion as follows.

\begin{defn}
Under the assumptions in Section 2, the entropy production rate of the jump diffusion $(X_t)_{t\geq 0}$ which is given by the SDE \eqref{LeqX} is 
\begin{align}\label{epr}
    e_p(t) = & \frac{d S(t)}{dt} -  h_{d}(t) \nonumber\\
     = & \beta\int_{\mathbb{R}^n}  \frac{(j^{loc}(t,x))^TA^{-1}(x)j^{loc}(t,x)}{\rho(t,x)} dx +\frac{1}{2} \int_{\mathbb{R}^n}\int_{\mathbb{R}^n \setminus \{x\}} j^{nl}(t,x,y)\log \left( \frac{\rho(tx)k(x,y)}{\rho(t,y)k(y,x)} \right) dydx \nonumber\\
     = & \beta\int_{\mathbb{R}^n} (A^{-1}(x)b(x) - \beta^{-1} \nabla \log\rho(t,x))^{T}A(x)(A^{-1}(x)b(x) - \beta^{-1} \nabla \log\rho(t,x)) \rho(t,x)dx \nonumber \\
     &+\frac{1}{2} \int_{\mathbb{R}^n}\int_{\mathbb{R}^n \setminus \{x\}} (\rho(t,x)k(x,y) - \rho(t,y)k(y,x))\log \left( \frac{\rho(t,x)k(x,y)}{\rho(t,y)k(y,x)} \right) dydx.
\end{align}
\end{defn}

By the nonnegative property of entropy production rate, we immediately obtain the Clausius inequality for jump diffusion. This result implies the famous second law of thermodynamics for the jump diffusions. 

\begin{thm}[Clausius inequality]\label{SecondL}
Under the assumptions in Section 2, the entropy production rate of jump diffusion $X_t$ with temperature $\theta := \beta^{-1}$ satisfies the Clausius inequality
\begin{equation*}
   \theta\frac{dS(t)}{dt} -\frac{dQ(t)}{dt} = \theta e_{p}(t) \geq 0.
\end{equation*}
Moreover, the jump process is at the equilibrium state (detail balance) with the invariant measure $\mu(dx) = \rho_{ss}(x)dx$ if and only if $\theta\frac{dS(t)}{dt} - \frac{dQ(t)}{dt} = \theta e_p(t) = 0$.
\end{thm}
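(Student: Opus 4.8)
The plan is to read everything off the closed-form expression \eqref{epr} for $e_p(t)$ and reduce the Clausius inequality to the nonnegativity of its two constituent integrals. First I would dispose of the bookkeeping: since $h_d(t)=\beta\,dQ(t)/dt$ by \eqref{HDr} and, by definition, $e_p(t)=dS(t)/dt-h_d(t)$, multiplying by $\theta=\beta^{-1}$ gives $\theta e_p(t)=\theta\,dS(t)/dt-dQ(t)$, so the asserted identity is automatic and the inequality $\theta e_p(t)\geq 0$ is equivalent to $e_p(t)\geq 0$ because $\theta>0$.

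Next I would establish $e_p(t)\geq 0$ term by term from \eqref{epr}. For the local term, write it in the probability-current form $\beta\int_{\mathbb{R}^n}\rho(t,x)^{-1}(j^{loc}(t,x))^T A^{-1}(x)j^{loc}(t,x)\,dx$; since $A(x)$ is positive definite (or the term is absent in the degenerate case $A\equiv 0$), the integrand is pointwise nonnegative, and it is finite by the regularity assumption (E) together with the moment bound coming from the Lyapunov condition (D). For the nonlocal term, I would invoke the elementary inequality $(u-v)\log(u/v)\geq 0$ for all $u,v\geq 0$ (with the convention $0\log 0=0$), applied pointwise with $u=\rho(t,x)k(x,y)$ and $v=\rho(t,y)k(y,x)$; assumption (E) bounds the log-ratio, which guarantees that the double integral converges. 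Adding the two nonnegative pieces yields $e_p(t)\geq 0$, hence the Clausius inequality $\theta\,dS(t)/dt-dQ(t)=\theta e_p(t)\geq 0$, which is the second law for the jump diffusion since $S$ is the Gibbs entropy.

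For the equality statement I would argue that $e_p(t)=0$ forces each of the two nonnegative integrals to vanish. The local term vanishes iff $A^{-1}(x)b(x)=\beta^{-1}\nabla\log\rho(t,x)$ for $\rho(t,\cdot)$-a.e. $x$, equivalently $j^{loc}(t,\cdot)\equiv 0$ (multiply by $\rho A$ and use positive-definiteness; when $A\equiv 0$ the local term is read as zero and the statement reduces to the jump part). The nonlocal term vanishes iff $\rho(t,x)k(x,y)=\rho(t,y)k(y,x)$ for a.e. $(x,y)$, i.e. $j^{nl}(t,\cdot,\cdot)\equiv 0$; continuity of $\rho$ (which is $C^2$) and of $k$ promotes ``a.e.'' to ``everywhere.'' Thus $e_p(t)=0$ is exactly the detailed balance condition. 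To close the equivalence I would then note that under detailed balance the nonlocal Fokker–Planck equation \eqref{nonlocalFk} reads $\partial_t\rho(t,\cdot)=-\nabla\cdot j^{loc}(t,\cdot)-\int_{\mathbb{R}^n\setminus\{x\}}j^{nl}(t,\cdot,y)\,dy=0$, so by uniqueness of solutions of \eqref{nonlocalFk} (a consequence of the well-posedness in Theorem \ref{ExD}) the law is time-independent, equal to some $\rho_{ss}$, and $\mu(dx)=\rho_{ss}(x)\,dx$ is invariant; conversely, if the process is stationary with $\mu=\rho_{ss}\,dx$ satisfying detailed balance, then $j^{loc}\equiv 0$ and $j^{nl}\equiv 0$, so \eqref{epr} gives $e_p(t)=0$.

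I expect the main obstacle not to be the two pointwise inequalities, which are elementary, but the rigor of the equality analysis: justifying finiteness of the entropy-production integrals from assumptions (D) and (E), passing from vanishing of the integrals to everywhere-vanishing of the currents $j^{loc}$ and $j^{nl}$, and invoking uniqueness of the Fokker–Planck flow to upgrade ``zero entropy production at one instant'' to ``the law is the stationary density.''
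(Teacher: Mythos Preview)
Your proposal is correct and follows essentially the same approach as the paper: both arguments split $e_p(t)$ into its local and nonlocal pieces via \eqref{epr}, use positive-definiteness of $A$ to get nonnegativity of the local quadratic form $\int \rho^{-1}(j^{loc})^T A^{-1} j^{loc}\,dx$, and invoke the elementary inequality $(u-v)\log(u/v)\ge 0$ for the nonlocal double integral, with the equality cases giving exactly $j^{loc}\equiv 0$ and $j^{nl}\equiv 0$. Your additional discussion of integrability via assumptions (D)--(E) and the Fokker--Planck uniqueness argument linking detailed balance to stationarity goes beyond the paper's terse proof, which simply identifies $e_p=0$ with detailed balance at the invariant density without elaborating on finiteness or the passage to $\rho_{ss}$; these are welcome refinements but not a different method.
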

\begin{proof}
Note that $j^{nl}(t,x,y)= \rho(t,x)k(x,y) - \rho(t,y)k(y,x)$ and $\log \left( \frac{\rho(t,x)k(x,y)}{\rho(t,y)k(y,x)} \right)$ are both positive or negative. Thus
\begin{equation*}
    \int_{\mathbb{R}^n}\int_{\mathbb{R}^n \setminus \{x\}} (\rho(t,x)k(x,y) - \rho(t,y)k(y,x))\log \left( \frac{\rho(t,x)k(x,y)}{\rho(t,y)k(y,x)} \right) dydx \geq 0,
\end{equation*}
and the equality holds if and only if $j^{nl}(t,x,y)= \rho(t,x)k(x,y) - \rho(t,y)k(y,x) = 0$ for every $x,y \in \mathbb{R}^n$ and every $t \geq 0$.

Note that
\begin{align*}
    & \int_{\mathbb{R}^n} (A^{-1}(x)b(x) - \beta^{-1} \nabla \log\rho(t,x))^{T}A(x)(A^{-1}(x)b(x) - \beta^{-1} \nabla \log\rho(t,x)) \rho(t,x)dx \nonumber \\
    = & \int_{\mathbb{R}^n} \frac{(j^{loc}(t,x))^TA^{-1}(x)j^{loc}(t,x) }{\rho(t,x)}dx \geq 0,
\end{align*}
and the equality holds if and only if $j^{loc}(t,x) = 0$ for every $x\in \mathbb{R}^n$ and every $t \geq 0$.
Thus for every $t \geq 0$, we have
\begin{equation*}
    \theta \frac{dS(t)}{dt} - \frac{dQ(t)}{dt} = \theta \frac{dS(t)}{dt} - \theta h_{d}(t)= \theta e_{p}(t)\geq 0.
\end{equation*}
Moreover, for every $x,y\in \mathbb{R}^n$, $j^{loc}(t,x) = 0$ and $j^{nl}(t,x,y)=0$ almost surely, if and only if $e_{p}(t) =0$. It implies that process $X_t$ satisfies the detail balance condition at its invariant measure if and only if $\theta\frac{dS(t)}{dt} - dQ(t) =0$.
\end{proof}

\begin{rem}
    The thermodynamical properties of diffusion and jump processes have also been discussed in some works. We refer the interested reader to \cite{MNW08,S05,JQQ04,stochasticThermo1,stochasticThermo2} and the references therein.
\end{rem}

In isolated systems, thermodynamics theory yields that the total internal energy of the system changes due to exchange heat with the heat bath, and the free energy decreases until it reaches to its minimum at equilibrium.
Recall that the free energy of $X_t$ is defined as 
\begin{equation*}
    F(t) := U(t) - \theta S(t).
\end{equation*}

\begin{rem}
{When the stationary measure $\mu_{ss}$ has the Gibbs measure $\rho_{ss}(x) = Z^{-1}e^{-V(x)}$.
The free energy of $X_t$ is equivariantly given the relative entropy between $\mu_{t}$ and $\mu_{ss}$:
\begin{equation*}
    F(t)=  \beta^{-1}\mathcal{H}(\mu_{t} | \mu_{ss}) = \beta^{-1}\int_{\mathbb{R}^n} \log \left( \frac{\rho(t,x)}{\rho_{ss}(x)} \right)\rho(t,x)dx.
\end{equation*}}
\end{rem}

Now we show the connection between the entropy production rate and the free energy dissipation of irreversible jump diffusions in isolated system. 

\begin{thm}
Suppose that the assumptions in Section 2 hold,  and there is no external force $f^{nl}$ from outside in the drift, and for every $x,y \in \mathbb{R}^n$, the external driving to the nonlocal transitions  $f^{nl}(x,y)=0$. We also assume that the stochastic dynamics $X_t$ satisfies the detailed balance condition at Gibbs measure $\mu(dx) = \frac{1}{Z}e^{-\beta V(x)}dx$.
Then the dissipation rate of free energy is given by the entropy production rate
\begin{equation*}
    \frac{d F(t)}{dt} = - \theta e_{p}(t).
\end{equation*}
\end{thm}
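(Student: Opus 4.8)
The plan is to differentiate $F(t) = U(t) - \theta S(t)$ and to identify the two resulting terms with quantities already computed in this section. Writing $\dot F(t) = \dot U(t) - \theta\,\dot S(t)$, the definition \eqref{epr} of the entropy production rate reads $\dot S(t) = e_p(t) + h_d(t)$, so it suffices to show that $\dot U(t) = \theta\, h_d(t)$; granting this, $\dot F(t) = \theta h_d(t) - \theta\big(e_p(t) + h_d(t)\big) = -\theta e_p(t)$.

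To prove $\dot U(t) = \theta\, h_d(t)$ I would first translate the hypotheses into statements about the coefficients. Detailed balance at $\mu(dx) = Z^{-1}e^{-\beta V(x)}dx$ forces $j^{loc}$ and $j^{nl}$ to vanish at $\rho_{ss} = Z^{-1}e^{-\beta V}$: from $j^{loc} = (b + A\nabla V)\rho_{ss} = 0$ one gets $A^{-1}(x)b(x) = -\nabla V(x)$ (so $f^{loc}\equiv 0$), and from $\rho_{ss}(x)k(x,y) = \rho_{ss}(y)k(y,x)$ one gets $\log\!\big(k(x,y)/k(y,x)\big) = \beta\big(V(x)-V(y)\big)$ (so $f^{nl}\equiv 0$), consistent with the stated assumptions; note these are identities in $x,y$, hence valid for every $t$ regardless of the current law $\rho(t,\cdot)$. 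Substituting them into the final line of \eqref{HDr}, the local term becomes $-\beta\int_{\R^n} b^T A^{-1} j^{loc}\,dx = \beta\int_{\R^n}\nabla V\cdot j^{loc}\,dx$ and the nonlocal term becomes $-\tfrac{\beta}{2}\int_{\R^n}\int_{\R^n} j^{nl}\,[V(x)-V(y)]\,dy\,dx$; comparing with the expression for $dU(t)/dt$ displayed earlier in this section, the right-hand side is exactly $\beta\, dU(t)/dt$. Equivalently one can invoke the first law directly: since $f^{loc}\equiv 0$ and $f^{nl}\equiv 0$ the power $dW(t)/dt$ vanishes identically, so $dQ(t) = dU(t)$ and $h_d(t) = \beta\, dQ(t)/dt = \beta\, dU(t)/dt$.

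Combining the two steps, $\dot F(t) = \dot U(t) - \theta\,\dot S(t) = \theta h_d(t) - \theta\big(e_p(t) + h_d(t)\big) = -\theta e_p(t)$, which is the claim.

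The algebra is routine; the only points needing care are (i) the reading of the ``no external force'' hypotheses, which I would record as the two coefficient identities above and note are automatic consequences of detailed balance at the Gibbs measure, and (ii) the interchange of $d/dt$ with the spatial integrals defining $\dot U$ and $\dot S$, legitimized by the regularity assumption (E) together with the uniform moment bound supplied by the Lyapunov condition (D). I expect (ii) to be the only genuine technical obstacle, and it is handled exactly as in the derivation of \eqref{Dentropy}.
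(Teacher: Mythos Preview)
Your proposal is correct and follows essentially the same approach as the paper: both arguments reduce the claim to showing $h_d(t)=\beta\,dU(t)/dt$ (equivalently $dW/dt=0$) under the stated hypotheses, and then combine this with the definition $e_p = dS/dt - h_d$. The only cosmetic difference is that the paper cites Theorem~\ref{TSeq} for the identities $b=-A\nabla V$ and $k(x,y)/k(y,x)=e^{\beta(V(x)-V(y))}$, whereas you derive them directly from detailed balance at the Gibbs density; your route is slightly more self-contained since Theorem~\ref{TSeq} appears later in the paper.
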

\begin{proof}
Since there is no external force and external driving to the nonlocal transitions in the isolated system, the change of heat $dQ = -dU$. Then the heat dissipation rate as 
\begin{equation*}
    h_{d}(t) = \frac{1}{\theta} \frac{dQ}{dt} = -\beta\frac{d U(t)}{dt}.
\end{equation*}
Then by Theorem \ref{TSeq}, drift $b(x) = -A(x)\nabla V(x)$, and the kernel
\begin{equation*}
    k(x,y) = e^{-\beta[V(y)-V(x)]/2} s(x,y).
\end{equation*}
Based on the probability currents $j^{loc}(t,x)$ and $j^{nl}(t,x,y)$, the heat dissipation $h_{d}(t)$ is given by
\begin{align}
    h_{d}(t) = &- \beta\frac{d U(t)}{dt} \nonumber\\
             = & -\beta\int_{\mathbb{R}^n} \nabla V(x) \cdot j^{loc}(t,x)dx + \frac{\beta}{2}\int_{\mathbb{R}^n} \int_{\mathbb{R}^n \setminus \{x\}} j^{nl}(t,x,y)  [V(x) - V(y)] dydx \nonumber \\
             = & -\beta\int_{\mathbb{R}^n} b^T(x)A^{-1}(x)\cdot j^{loc}(t,x)dx - \frac{1}{2}\int_{\mathbb{R}^n}\int_{\mathbb{R}^n \setminus \{x\}} j^{nl}(t,x,y)\log \left( \frac{k(x,y)}{k(y,x)} \right) dydx .
\end{align}
Combining with \eqref{Dentropy}, we have
\begin{equation*}
     \frac{d F(t)}{dt} = \theta\frac{dS(t)}{dt} - \theta h_{d}(t) = - \theta e_{p}(t) \leq  0.
\end{equation*}
\end{proof}

\section{Connection with time forward/reversed path measures}\label{Sec4}

In this section, we will show that the total entropy production of the jump diffusions over the interval $[0,T]$ is given by the relative entropy between its forward and time-reversed path measures. Moreover, the entropy production rate quantifies the degree of time-irreversibility in the jump diffusions. 

We first recall the path measure of jump diffusions.
Let $(X_t)_{t\geq 0}$ be the jump process with a Markov transition density $p_t(x,y)$, which is given by the SDE \eqref{LeqX}. The Kolmogorov extension theorem implies that there exists a unique path probability measure $\mathcal{P}_{[0,T]}$ on the canonical (path) space $D([ 0,T], \mathbb{R}^n)$ such that for every $T>0$, $0 = t_{0} \leq t_1 \leq \dots \leq t_m \leq T$, and $E_j \in \mathscr{B}(\mathbb{R}^n), 1\leq j \leq m$, we have
\begin{equation*}
    \mathcal{P}(X_{t_0} \in E_0, \dots, X_{t_m} \in E_m) = \int_{E_0}dx_0\dots\int_{E_m} \Pi^m_{j-1} p_{t_j-t_{j-1}}(x_j,x_{j-1})p_0(x_0)dx_0 \dots dx_m.
\end{equation*}
Under the path measure $\mathcal{P}$, we can consider the process $(X_t)_{t\geq 0}$ as a random variable on the Skorokhod (path) space $D([ 0,T], \mathbb{R}^n)$, { which is the space of sample paths of the right continuous functions with left limits in the state space. The space $D([ 0,T], \mathbb{R}^n)$ is embedded with the Skorokhod metric:
\begin{equation*}
d(\vartheta_1, \vartheta_2)=\inf_{\lambda \in \Lambda} \left\{ \sup_{t \in [0,\infty)}|\lambda(t) - t| + \sup_{t \in [0,\infty)}\left| \vartheta_{1}(\lambda(t))-\vartheta_{2}(t)\right| \right\}, \quad \vartheta_1, \vartheta_2 \in D([ 0,T], \mathbb{R}^n),
\end{equation*}
where $\Lambda$ denotes the set of all strictly increasing continuous bijections from $[0,\infty)$ to itself, see e.g. \cite{A2004}}.


In stochastic analysis for jump diffusion, one can apply the Girsanov transform for absolutely continuous changes between two paths on the path space $D([0,T];\mathbb{R}^n)$, see e.g. \cite[Theorem 33.2]{S99} and \cite[Theorem 2.4]{FK21}.
In order to introduce the Girsanov transform, we first define the exponential process 
\begin{align*}
    Z_t:= & \exp\bigg( \int_0^t \sqrt{2\beta^{-1}} h^T(s,X_{s^-})a(X_{s^-}) dB_s -  \beta^{-1}\int_0^t h^T(s) A(X_{s^-}) h(s,X_{s^-}) ds \nonumber \\
    & \quad  + \int_0^t \int_{\mathbb{R}^n\setminus\{0\}} \log K(s, z) N_X(ds dz) - \int_0^t \int_{\mathbb{R}^n\setminus\{0\}} (K(s,X_{s^-},z)-1) k(X_{s^-}, X_{s^-}+z)dz ds \bigg),
\end{align*}
where $h: [0,T]\times \Omega \rightarrow \mathbb{R}^n$ is a predictable process, $\gamma \times \Omega \rightarrow \mathbb{R}^{n \times n}$ is a $n\times n$ value matrix predictable process, and $K: [0,T] \times (\mathbb{R}^n \setminus\{0\}) \times \Omega \rightarrow \mathbb{R}^n$ is a predictable process.
We also assume that the predictable processes $f, A, K$ satisfy that
\begin{align}\label{ConD}
    &\int^T_0 |h^{T}(s,X_{s^-})A(X_{s^-}) h(s,X_{s^-})|ds \nonumber \\
    & \quad +  \int_{\mathbb{R}^n\setminus\{0\}} |K(s,X_{s^-},z) +1|(\log^{+} |K(s,X_{s^-},z)|+1) k(X_{s^-}, X_{s^-}+z)dz ds < \infty
\end{align}
so that exponential process $Z_t$ is a uniformly integrable $\mathcal{P}$-martingale.

Let $\mathcal{P}$ be the path measure of $X_t$ on the canonical space $D([0,T],\mathbb{R}^n)$. 
Define a new probability measure $\mathcal{Q}$ on $(\mathscr{F}_t)_{t \geq 0}$ via the Radon-Nikodym derivative:
\begin{equation*}
     \frac{d\mathcal{Q}}{d\mathcal{P}} \bigg|_{\mathscr{F}_t} = Z_t.
\end{equation*}
Now we recall the Girsanov transform for jump diffusions (see e.g. \cite[Theorem 33.2]{S99} or \cite[Theorem 2.4]{FK21}).

\begin{thm}[Girsanov transform]
Under the path measure $\mathcal{Q}$, the jump diffusion $X_t$ solves the martingale problem for the modified generator
\begin{align}
    \mathcal{L}_t^{\mathcal{Q}}f := b^{\mathcal{Q}}(t,x) \cdot\nabla f(x) + \beta^{-1} \nabla\cdot (A(x)\nabla f(x)) +   \int_{\mathbb{R}^{n}\setminus \{x\}} [f(y)-f(x)] k^{\mathcal{Q}}(t,x,y)dy,
\end{align}
where 
\begin{equation*}
    b^{\mathcal{Q}}(t,x):= b(x) + a(x) h(t,x),
\end{equation*}
and
\begin{equation*}
    k^{\mathcal{Q}}(t,x,y):= K(s,x,y -x)k(y,x).
\end{equation*}
Moreover, under path measure $\mathcal{Q}$ the jump diffusion $X_t$ satisfies the new SDE:
\begin{align}
 dX_t = & \left[ b_t^{Q}(X_{t^-}) + \beta^{-1}\nabla \cdot A(X_{t^-}) \right]dt + \sqrt{2\beta^{-1}} a(X_{t^-})dB^{\mathcal{Q}}_t + \int_{\mathbb{R}^n \setminus\{0\}} z N^{\mathcal{Q}}_X(dt,dz).
\end{align}
Here, $B^{\mathcal{Q}}_t$ is a $\mathcal{Q}$-Brownian motion, and the Poisson random measure $N^{\mathcal{Q}}_X(dt,dz)$ has L\'{e}vy measure $k^{\mathcal{Q}}(t,X_{t^{-}}, X_{t^{-}}+z)dtdz$ under $\mathcal{Q}$.
\end{thm}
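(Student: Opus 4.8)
The plan is to identify the law of $X$ under $\mathcal{Q}$ through the martingale-problem characterization, using the Girsanov--Meyer change-of-measure rule: a c\`adl\`ag process $Y_t$ is a $\mathcal{Q}$-local martingale if and only if $Y_t Z_t$ is a $\mathcal{P}$-local martingale. First I would rewrite the exponential process as a Dol\'eans-Dade exponential $Z_t = \mathcal{E}(N)_t$ of the $\mathcal{P}$-local martingale
\[
  N_t = \int_0^t \sqrt{2\beta^{-1}}\, h^T(s) a(X_{s^-})\, dB_s + \int_0^t \int_{\mathbb{R}^n\setminus\{0\}} \bigl(K(s,z)-1\bigr)\, \widetilde{N}_X(ds,dz),
\]
which follows by applying the It\^o formula for jump semimartingales to $\log Z_t$; the stochastic logarithm reproduces exactly the four terms in the definition of $Z_t$. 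The integrability assumption \eqref{ConD} is then invoked to upgrade $Z_t$ from a nonnegative $\mathcal{P}$-local martingale (hence a supermartingale) to a genuine uniformly integrable $\mathcal{P}$-martingale on $[0,T]$, so that $\mathcal{Q}$ is a well-defined probability measure with $\mathcal{Q}|_{\mathscr{F}_t} = Z_t\, \mathcal{P}|_{\mathscr{F}_t}$.

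Next, fix $f \in C_c^2(\mathbb{R}^n)$ and set $M_t^{\mathcal{Q},f} := f(X_t) - f(X_0) - \int_0^t \mathcal{L}_s^{\mathcal{Q}} f(X_s)\, ds$. By the abstract Bayes rule it suffices to show that $Z_t M_t^{\mathcal{Q},f}$ is a $\mathcal{P}$-local martingale. I would compute this via the integration-by-parts formula $d(Z_t M_t^{\mathcal{Q},f}) = Z_{t^-}\, dM_t^{\mathcal{Q},f} + M_{t^-}^{\mathcal{Q},f}\, dZ_t + d[Z, M^{\mathcal{Q},f}]_t$, inserting (i) the $\mathcal{P}$-semimartingale decomposition of $f(X_t)$ from the It\^o formula, whose drift is $\mathcal{L}^{\mathcal{P}}f(X_t)\, dt$ and whose martingale part has Brownian piece $\sqrt{2\beta^{-1}}\,\nabla f(X_{t^-})^T a(X_{t^-})\, dB_t$ and jump piece $\int [f(X_{t^-}+z) - f(X_{t^-})]\,\widetilde{N}_X(dt,dz)$; (ii) $dZ_t = Z_{t^-}\, dN_t$; and (iii) the covariation $[Z, f(X)]$, whose continuous part comes from the two Brownian integrals and whose jump part is $\sum_{s\le t}\Delta Z_s\,\Delta f(X_s) = \int_0^t\int Z_{s^-}\bigl(K(s,z)-1\bigr)[f(X_{s^-}+z)-f(X_{s^-})]\,N_X(ds,dz)$.

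Collecting the predictable finite-variation terms and compensating the last Poisson integral produces the drift
\[
  Z_{t^-}\Bigl[\mathcal{L}^{\mathcal{P}}f(X_t) + (\text{Brownian covariation term}) + \int (K(t,z)-1)[f(X_t+z)-f(X_t)]\,k(X_t,X_t+z)\,dz\Bigr]\,dt,
\]
which is precisely $Z_{t^-}\,\mathcal{L}_t^{\mathcal{Q}}f(X_t)\,dt$ once the drift correction is identified with the $a h$ term (modulo the inverse-temperature normalization coming from the two factors of $\sqrt{2\beta^{-1}}$) and the jump terms are combined into $\int [f(X_t+z)-f(X_t)]\,k^{\mathcal{Q}}(t,\cdot)\,dz$. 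Since this exactly cancels the contribution of $-\int_0^t Z_s\,\mathcal{L}^{\mathcal{Q}}_s f(X_s)\, ds$, the remaining terms are stochastic integrals against $dB$ and $\widetilde{N}_X$, which are $\mathcal{P}$-local martingales; hence $M^{\mathcal{Q},f}$ is a $\mathcal{Q}$-local martingale, i.e. $X$ solves the $(\mathcal{L}^{\mathcal{Q}},\mathcal{Q})$-martingale problem. Finally, the SDE representation is read off the $\mathcal{Q}$-semimartingale characteristics: the continuous local-martingale part of $X$ has quadratic variation $2\beta^{-1}A(X_t)\,dt$, so (inverting $a$ on the nondegenerate set, trivially when $A\equiv 0$) it defines a $\mathcal{Q}$-Brownian motion $B^{\mathcal{Q}}_t$ by L\'evy's characterization; the jump part has $\mathcal{Q}$-compensator $k^{\mathcal{Q}}(t,\omega,X_t+z)\,dz\,dt$, defining $N_X^{\mathcal{Q}}$; and the bounded-variation part equals $\int_0^t [b^{\mathcal{Q}}(s,\omega,X_s) + \beta^{-1}\nabla\cdot A(X_s)]\,ds$. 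The main obstacle will be the bookkeeping in the integration-by-parts step---in particular, handling the jump part of $[Z,f(X)]$ and its compensation correctly and verifying that all the remainder integrals satisfy the integrability needed to be (local) martingales---together with checking that \eqref{ConD} genuinely forces $Z$ to be a true martingale rather than a strict local one.
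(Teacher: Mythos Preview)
Your proposal is a correct and standard route to the Girsanov theorem for jump diffusions: writing $Z$ as a Dol\'eans--Dade exponential, using the Bayes criterion $M$ is a $\mathcal{Q}$-local martingale iff $ZM$ is a $\mathcal{P}$-local martingale, and then computing $d(Z_t M_t^{\mathcal{Q},f})$ by integration by parts to see that the finite-variation parts cancel. The bookkeeping you flag (compensating the jump covariation, tracking the $2\beta^{-1}$ factor) is exactly where the work lies, and there is no conceptual gap.

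However, the paper does \emph{not} prove this theorem at all. It is stated without proof and attributed to the literature: immediately before the statement the authors write that ``one can apply the Girsanov transform for absolutely continuous changes between two paths on the path space $D([0,T];\mathbb{R}^n)$, see e.g.\ [Theorem 33.2]{S99} and [Theorem 2.4]{FK21}.'' So there is nothing in the paper to compare your argument against; you have supplied a proof where the authors simply invoke Sato and Fujisaki--Komatsu. Your approach is essentially the argument underlying those references, so in that sense it is aligned with what the paper is implicitly relying on.
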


In order to give the time reversal path measure of $(X_t)_{t\in [0,t]}$, we need to consider its time reversal jump diffusion of $(X_t)_{t\geq 0}$ and the associated time reversal SDE.

\begin{lem}[Time Reversal Formula]\label{TRformula}
Let $(X_t)_{0 \leq t \leq T}$  be a jump diffusion satisfying the SDE \eqref{LeqX} with smooth density $\rho$. Assume that the assumptions in Section 2 hold.
{We further assume that for every $x \in \mathbb{R}^n$ and $t \in [0,T]$, $\nabla\log\rho(t,x)$ is Lipschitz continuous and
\begin{equation*}
    \int_{\mathbb{R}^n \setminus\{x\}} 1 \wedge |y-x|^2 \frac{\rho(t,y)}{\rho(t,x)} k(y,x) dy < \infty.
\end{equation*}}
Then the associated time-reversed process \( \{ X_t^R := X_{T - t} \}_{t \in [0, T]} \)  satisfying the following SDE
\begin{align}\label{LeqX_TR}
 dX^R_t = & b^{R}_t(X^R_{t})dt + \sqrt{2\beta^{-1}} a(X_{t^-}) \circ dB_t  +  \int_{\mathbb{R}^n\setminus\{0\}} z N^R(dt,dz) ,
\end{align}
where the time reversed $b^{R}_t(x) = -b(x) + 2\beta^{-1}A(x)\nabla\log\rho(T-t,x)$, and the time reversed Poisson random measure $N^R(dt,dz)$ has Poisson intensity $\lam$ and the L\'{e}vy measure 
\begin{equation*}
    \nu^R(dz) :=k^{R}_t(x,x+z)dz = \frac{\rho(T-t,x+z)k(x+z,x)}{\rho(T-t,x)k(x,x+z)}k(x,x+z)dz.
\end{equation*}
\end{lem}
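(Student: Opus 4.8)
The plan is to identify the law of the time-reversed process $X^R_t=X_{T-t}$ through its finite-dimensional distributions, extract its (time-inhomogeneous) generator, and then recognise that generator as the one of the SDE \eqref{LeqX_TR}. Under the assumptions of Section \ref{Sec2} together with the smoothness of $\rho$, the process $(X_t)_{0\le t\le T}$ has a positive transition density $p_{u,v}(x,y)$ and marginal density $\rho(t,\cdot)$, so Bayes' rule gives, for $0\le s<t\le T$, the one-step transition kernel of $X^R$:
\[
\widehat p_{s,t}(y,x) \;=\; \frac{\rho(T-t,x)}{\rho(T-s,y)}\,p_{T-t,T-s}(x,y).
\]
In particular $X^R$ is again Markov, and consistency of these kernels is inherited from $X$.

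Next I would differentiate at $t=s^{+}$ to obtain the generator of $X^R$. Writing $r=T-s$, using the marginal identity $\int \rho(r-h,x)p_{r-h,r}(x,y)\,dx=\rho(r,y)$ and the forward nonlocal Fokker--Planck equation \eqref{nonlocalFk}, $\partial_r\rho=\mathcal L^{\ast}\rho$, a first-order expansion in $h$ of $\int \widehat p_{s,s+h}(y,x)f(x)\,dx$ yields, for $f\in C_c^2(\mathbb{R}^n)$, the dual-generator identity
\[
\widehat{\mathcal L}_s f(y) \;=\; \frac{1}{\rho(T-s,y)}\Big(\mathcal L^{\ast}\big(\rho(T-s,\cdot)f\big)(y)-f(y)\,\mathcal L^{\ast}\rho(T-s,\cdot)(y)\Big).
\]
This is the key intermediate step; the passage $h\to0$ is justified by the smoothness of $\rho$ and the regularity condition (E).

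It then remains to evaluate the right-hand side with the explicit $\mathcal L^{\ast}$ from Section \ref{Sec2}. For the local part, expanding $\mathcal L^{\ast}_{\mathrm{loc}}(\rho f)-f\mathcal L^{\ast}_{\mathrm{loc}}\rho$ and integrating by parts (using that $A$ is symmetric), then dividing by $\rho$, produces exactly $\big(-b+2\beta^{-1}A\nabla\log\rho\big)\cdot\nabla f+\beta^{-1}\nabla\cdot(A\nabla f)$ --- the generator of the Stratonovich diffusion with reversed drift $b^R_t(x)=-b(x)+2\beta^{-1}A(x)\nabla\log\rho(T-t,x)$ and unchanged $a$. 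For the nonlocal part the cancellation is purely algebraic: $\mathcal L^{\ast}_{\mathrm{nl}}(\rho f)(x)-f(x)\mathcal L^{\ast}_{\mathrm{nl}}\rho(x)=\int k(y,x)\rho(y)\,[f(y)-f(x)]\,dy$, so dividing by $\rho(x)$ and substituting $y=x+z$ one reads off the reversed jump kernel $k^R_t(x,x+z)=\frac{\rho(T-t,x+z)}{\rho(T-t,x)}k(x+z,x)$, i.e. the reversed L\'evy measure $\nu^R$ of the statement.

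Finally I would verify that these coefficients genuinely define a jump diffusion: the extra hypothesis $\int_{\mathbb{R}^n\setminus\{x\}}(1\wedge|y-x|^2)\tfrac{\rho(t,y)}{\rho(t,x)}k(y,x)\,dy<C$ is exactly what makes $\int(1\wedge|z|^2)\,\nu^R(dz)<\infty$ hold uniformly in $t$, while (E) controls $\nabla\log\rho$; hence, by a time-inhomogeneous version of Theorem \ref{ExD}, the martingale problem for $(\widehat{\mathcal L}_t)$ is well posed and $X^R$ is its unique solution, which is precisely \eqref{LeqX_TR}. I expect the main obstacle to be the rigorous treatment of the nonlocal term --- interchanging the $z$-integration with the limit $h\to0$ and controlling the reversed intensity near $z=0$ --- where the new integrability assumption together with (E) do the essential work; it should be cleanest to prove the identity first for the jump part truncated away from the origin and then pass to the limit.
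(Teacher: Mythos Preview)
Your proposal is correct and in fact more careful than the paper's own argument. Both approaches rest on the same duality identity between $\mathcal L$ and $\mathcal L^\ast$, but the paper takes a shortcut: it only verifies that the reversed marginal density $\rho^R(t,x)=\rho(T-t,x)$ satisfies a Fokker--Planck equation $\partial_t\rho^R=(\mathcal L^R)^\ast\rho^R$, and from this alone declares that $X^R$ has generator $\mathcal L^R$. That step is heuristic, since a marginal density does not by itself determine the generator. You instead start from the Bayes formula for the reversed transition kernel, differentiate to obtain the generator acting on test functions via $\widehat{\mathcal L}_sf=\rho^{-1}\bigl(\mathcal L^\ast(\rho f)-f\,\mathcal L^\ast\rho\bigr)$, and then expand the local and nonlocal pieces explicitly; this genuinely identifies the generator of $X^R$ rather than merely an equation compatible with its one-time marginals. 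Your explicit use of the integrability hypothesis to guarantee that $\nu^R$ is a L\'evy measure, and of condition~(E) to control $\nabla\log\rho$, are points the paper leaves implicit. The one place to be vigilant is the justification of the limit $h\to0$ through the nonlocal integral; your plan to truncate away from $z=0$ and pass to the limit is the standard way to make this honest.
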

\begin{rem}\label{rmk:integrability}
    The condition 
    \begin{equation*}
    \int_{\mathbb{R}^n \setminus\{x\}} 1 \wedge |y-x|^2 \frac{\rho(t,y)}{\rho(t,x)} k(y,x) dy < \infty
\end{equation*}
in Lemma~\ref{TRformula} ensures the integrability of the L\'evy measure for the time-reversed process \( \{ X_t^R \}_{t \in [0, T]} \), see~\eqref{integrability}.
\end{rem}
\begin{rem}
     Regularity condition (E) in Section~\ref{Sec2} requires that $\nabla\log\rho(t,x)$ is bounded. However this is not sufficient for ensuring the well-posedness of the time reversal SDE~\eqref{LeqX_TR}. Therefore, we assume that the term $\nabla\log\rho(t,x)$ is Lipschitz continuous in this section. 
\end{rem}

\begin{proof}
Note that the time reversal process $\{ X_t^R := X_{T - t} \}_{t \in [0, T]} $ has the density $\rho^{R}(t,x) = \rho(T-t,x)$. Using the Fokker-Planck equation \eqref{nonlocalFk}, the time reversal density $\rho^{R}(t,x) = \rho(T-t,x)$ satisfies
\begin{align*}
    & \partial_t \rho^{R}(t,x) \\
    = & -\partial_t \rho(T-t,x) \\
    = & \nabla\cdot[ b(x)\rho(T-t,x) - \beta^{-1}A(x)\nabla\rho(T-t,x)] \\
    & - \int_{\mathbb{R}^n\setminus\{0\}} k(y,x)\rho(T-t,y) - k(x,y)\rho(T-t,x) dy \\
    = & \nabla\cdot[ b(x)\rho^R(t,x) - \beta^{-1}A(x)\nabla\rho^R(t,x)] + \int_{\mathbb{R}^n\setminus\{0\}} k^{R}_t(y,x)\rho^R(t,y) - k^{R}_t(x,y)\rho^R(t,x) dy \\
    := & (\mathcal{L}^{R})^{\ast}\rho(t,x),
\end{align*}
where the time reversed jump 
\begin{equation*}
    k^{R}_t(x, y) = \frac{\rho(T-t, y)}{\rho(T-t,x)}k(y,x).
\end{equation*}
Thus the time reversed process $\{ X_t^R := X_{T - t} \}_{t \in [0, T]} $ is a Markov process with generator 
\begin{equation*}
    \mathcal{L}^R f(x) = b^{R}(x) \cdot\nabla f(x) + \beta^{-1} \nabla\cdot (A(x)\nabla f(x)) +   \int_{\mathbb{R}^{n}\setminus \{0\}} [f(y)-f(x)] k^{R}_t(x,y)dy.
\end{equation*}
This ensures that the time-reversed process satisfies the SDE~\eqref{LeqX_TR}.
\end{proof}

{We denote $\mathcal{P}_{[0,T]}$ and $\mathcal{P}^R_{[0,T]}$ as the path measure of the forward process $(X_t)_{t \in [0,T]}$ on the path space $D([0,T], \mathbb{R}^n)$.}
In the next theorem, we show the relationship between the entropy production rate and the relative entropy rate between the forward path measure $\mathcal{P}_{[0,T]}$, and the time reversal path measure $\mathcal{P}^R_{[0,T]}$.

\begin{thm}\label{eprpath}
Suppose that the assumptions in Section 2 hold.
Let $(X_t)_{t \geq 0}$ be a stationary jump-diffusion process defined by the SDE \eqref{LeqX} with its invariant measure $\mu(dx):=\rho_{ss}(x)dx$, so that $X_t \overset{\text d}{=} \mu$ for all $t \in [0,T]$, and the density $\rho_{ss}$ satisfies that $\nabla\log\rho_{ss}$ is Lipschitz continuous, and
\begin{equation*}
    \int_{\mathbb{R}^n \setminus\{x\}} 1 \wedge |y-x|^2 \frac{\rho_{ss}(y)}{\rho_{ss}(x)} k(y,x) dy < \infty. 
\end{equation*}
Then the entropy production rate $e_{p}^{ss}$ of  $(X_t)_{t \geq 0}$ satisfies
\begin{equation*}
    e^{ss}_p=\frac{1}{T}\mathcal{H}(\mathcal{P}_{[0,T]} | \mathcal{P}^R_{[0,T]}) = -\frac{1}{T}\mathbb{E}^{\mathcal{P}_{[0,T]}} \left[ \log \left(\frac{\mathcal{P}^R_{[0,T]}}{\mathcal{P}_{[0,T]}}\right) \right].
\end{equation*}
\end{thm}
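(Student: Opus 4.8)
The plan is to compute the Radon--Nikodym derivative $d\mathcal{P}_{[0,T]}/d\mathcal{P}^R_{[0,T]}$ explicitly via the Girsanov transform, take its logarithm, integrate against $\mathcal{P}_{[0,T]}$, and recognize the result as $T\, e^{ss}_p$ with the formula \eqref{epr}. First I would note that both $\mathcal{P}_{[0,T]}$ and $\mathcal{P}^R_{[0,T]}$ are path measures on $D([0,T],\mathbb{R}^n)$ of jump diffusions sharing the same diffusion matrix $A$ and the same initial law $\mu$ (here using stationarity: $X_0\overset{\text d}{=}X_T\overset{\text d}{=}\mu$), so by the Girsanov theorem for jump diffusions stated above they are mutually absolutely continuous, with the forward drift/kernel pair $(b,k)$ versus the reversed pair $(b^R, k^R)$ from Lemma \ref{TRformula}, where $b^R(x) = -b(x) + 2\beta^{-1}A(x)\nabla\log\rho_{ss}(x)$ and $k^R(x,y) = \frac{\rho_{ss}(y)}{\rho_{ss}(x)}k(y,x)$ (time-independent by stationarity). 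Applying the Girsanov formula with $a\,h = \tfrac12(b - b^R) = b - \beta^{-1}A\nabla\log\rho_{ss}$, i.e. $h = \beta^{-1} a^{-1}(\beta A^{-1}b - \nabla\log\rho_{ss})\cdot(\text{scaling})$, and with jump-ratio $K(z) = k(X_{s^-},X_{s^-}+z)/k^R(X_{s^-},X_{s^-}+z)$, I would write
\begin{align*}
\log\frac{d\mathcal{P}_{[0,T]}}{d\mathcal{P}^R_{[0,T]}} = & \int_0^T \sqrt{2\beta^{-1}}\,\eta(X_{s^-})^T a(X_{s^-})\,dB_s - \tfrac12\int_0^T \eta(X_{s^-})^T A(X_{s^-})\eta(X_{s^-})\,ds \\
& + \int_0^T\!\!\int_{\mathbb{R}^n\setminus\{0\}} \log\!\Big(\tfrac{k(X_{s^-},X_{s^-}+z)}{k^R(X_{s^-},X_{s^-}+z)}\Big) N_X(ds\,dz) \\
& - \int_0^T\!\!\int_{\mathbb{R}^n\setminus\{0\}} \Big(\tfrac{k}{k^R}-1\Big) k^R(X_{s^-},X_{s^-}+z)\,dz\,ds,
\end{align*}
with $\eta(x) := \beta A^{-1}(x)b(x) - \nabla\log\rho_{ss}(x)$ the appropriate drift-difference vector.

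Next I would take $\mathbb{E}^{\mathcal{P}_{[0,T]}}$ term by term. The Brownian stochastic-integral term has zero expectation (the integrand is in the right $L^2$ class by assumptions (A), (B), (E) and the moment bound from (D)). For the compensated part of the jump integral: writing $N_X(ds\,dz) = \widetilde{N}_X(ds\,dz) + k(X_{s^-},X_{s^-}+z)\,dz\,ds$, the $\widetilde{N}_X$-part is a mean-zero martingale, so what survives from the two jump terms is $\mathbb{E}^{\mathcal{P}}\int_0^T\!\!\int \big[\log(k/k^R)\,k - (k/k^R - 1)k^R\big]\,dz\,ds$. Using stationarity, $\mathbb{E}^{\mathcal{P}}[\,\cdot\,] = \int_{\mathbb{R}^n}\rho_{ss}(x)(\cdot)\,dx$ for each $s$, so the time integral just multiplies by $T$; and substituting $k^R(x,x+z) = \frac{\rho_{ss}(x+z)}{\rho_{ss}(x)}k(x+z,x)$ and changing variables $y = x+z$, the quantity $\int\int \rho_{ss}(x)[\,k\log(k/k^R) - k + k^R\,]\,dy\,dx$ becomes, after symmetrizing $x\leftrightarrow y$, exactly $\tfrac12\int\int (\rho_{ss}(x)k(x,y)-\rho_{ss}(y)k(y,x))\log\frac{\rho_{ss}(x)k(x,y)}{\rho_{ss}(y)k(y,x)}\,dy\,dx$ (the $-k+k^R$ terms integrate to zero against $\rho_{ss}$ by $\int(\rho_{ss}(x)k(x,y)-\rho_{ss}(y)k(y,x))\,dy\,dx=0$, and the surviving integrand is symmetric, giving the factor $\tfrac12$). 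For the diffusion part: $-\tfrac12\mathbb{E}^{\mathcal{P}}\int_0^T \eta^T A\eta\,ds = -\tfrac{T}{2}\int_{\mathbb{R}^n}\eta(x)^T A(x)\eta(x)\rho_{ss}(x)\,dx$. Comparing with \eqref{epr} (using $j^{loc}_{ss}(x) = b\rho_{ss} - \beta^{-1}A\nabla\rho_{ss} = \beta^{-1}A\eta\,\rho_{ss}$, so $(j^{loc})^TA^{-1}j^{loc}/\rho_{ss} = \beta^{-2}\eta^T A\eta\,\rho_{ss}$ and $\beta\cdot\beta^{-2} = \beta^{-1}$), I would check the sign and normalization work out so that $-\tfrac1T\mathbb{E}^{\mathcal{P}}\log\frac{\mathcal{P}^R}{\mathcal{P}} = \tfrac1T\mathbb{E}^{\mathcal{P}}\log\frac{\mathcal{P}}{\mathcal{P}^R} = e^{ss}_p$; the first equality in the statement is just the definition $\mathcal{H}(\mathcal{P}\,|\,\mathcal{P}^R) = \mathbb{E}^{\mathcal{P}}\log\frac{d\mathcal{P}}{d\mathcal{P}^R}$.

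The main obstacle I anticipate is twofold and technical rather than conceptual. First, one must verify that the exponential process defining $d\mathcal{P}/d\mathcal{P}^R$ is genuinely a uniformly integrable martingale — i.e. check the integrability condition \eqref{ConD} for the specific choices of $h$ and $K$ here — which is where assumption (E) (the uniform bound on $\log\frac{\rho k(x,y)}{\rho k(y,x)}$) and the $1\wedge|y-x|^2$-integrability hypothesis on $\frac{\rho_{ss}(y)}{\rho_{ss}(x)}k(y,x)$ are needed; these guarantee both the change of measure is legitimate and that the various $\mathbb{E}^{\mathcal{P}}$'s and time--space integrals are finite and can be interchanged (Fubini). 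Second, the bookkeeping in the jump term — correctly passing between $N_X$, its compensator $k(X_{s^-},X_{s^-}+z)\,dz\,ds$, the reversed compensator $k^R$, and the change of variables $z\mapsto y=x+z$ combined with the $x\leftrightarrow y$ symmetrization that produces the factor $\tfrac12$ — is the step most prone to sign and factor errors, so I would carry that computation out carefully, cross-checking against the detailed-balance case where every term must vanish.
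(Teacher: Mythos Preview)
Your approach is essentially the paper's: apply the Girsanov transform between $\mathcal{P}_{[0,T]}$ and $\mathcal{P}^R_{[0,T]}$ using the time-reversal data $(b^R,k^R)$ from Lemma~\ref{TRformula}, take $\mathbb{E}^{\mathcal{P}}$, kill the martingale terms, invoke stationarity to turn the time integral into $T\cdot\int\rho_{ss}(x)(\cdots)\,dx$, and symmetrize $x\leftrightarrow y$ in the jump part to produce the factor $\tfrac12$ (the paper phrases the vanishing of the leftover $\int\int[\rho_{ss}(y)k(y,x)-\rho_{ss}(x)k(x,y)]\,dy\,dx$ as ``$\mathcal{L}^*\rho_{ss}=0$''). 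The only slip is a reference-measure mix-up in your displayed density: the compensator $k^R$ in your last line is the $\mathcal{P}^R$-compensator, so the $dB_s$ and $N_X$ there are the $\mathcal{P}^R$-Brownian motion and $\mathcal{P}^R$-Poisson measure, whose stochastic integrals are \emph{not} $\mathcal{P}$-martingales --- the paper avoids this by writing $\log(d\mathcal{P}^R/d\mathcal{P})$ with $\mathcal{P}$ as reference (so the stochastic integrals are against the $\mathcal{P}$-BM $B$ and the $\mathcal{P}$-compensated measure $\widetilde{N}_X$, and $\mathbb{E}^{\mathcal{P}}$ kills them directly), which is precisely the sign/normalization bookkeeping you anticipated needing to sort out.
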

\begin{rem}
    Similar to Remark~\ref{rmk:integrability}, the condition 
    \begin{equation*}
    \int_{\mathbb{R}^n \setminus\{x\}} 1 \wedge |y-x|^2 \frac{\rho_{ss}(y)}{\rho_{ss}(x)} k(y,x) dy < \infty
\end{equation*}
in Theorem~\ref{eprpath} ensures the integrability of the L\'evy measure for the process $(X_t)_{t \geq 0}$.
\end{rem}

\begin{proof}
Under the assumptions for jump rate $k(x,y)$ and $\rho_{ss}$, the jump diffusion $(X_t)_{t \geq 0}$ has its time-reversed process satisfying the SDE~\eqref{LeqX_TR}.
Applying the Girsanov transform for absolutely continuous changes between path measures $\mathcal{P}_{[0,T]}$ and $\mathcal{P}^R_{[0,T]}$, we obtain
\begin{align}
    \log \left(\frac{\mathcal{P}^R_{[0,T]}}{\mathcal{P}_{[0,T]}}\right) = & \beta\int^T_0 (b(X_t) - b^{R}(X_t)) A^{-1}(X_t)dB_t \nonumber \\ 
    & -\beta\int^{T}_0(b(X_t)- b^{R}(X_t))^{T}A^{-1}(X_t)(b(X_t) - b^{R}(X_t))dt \nonumber \\
    & +  \int_0^T \int_{\mathbb{R}^n\setminus\{0\}} \log \left( \frac{k(X_t + z, X_t)\, \rho_{ss}(X_t + z)}{k(X_t, X_t+z)\, \rho_{ss}(X_t)} \right) N_X(dtdz) \nonumber \\
    & - \int_0^t \int_{\mathbb{R}^n\setminus\{0\}} \left(\frac{k(X_t + z, X_t)\, \rho_{ss}(X_t + z)}{k(X_t, X_t+z)\, \rho_{ss}(X_t)}-1 \right) k(X_{t^-}, X_{t^-}+z)dz dt.
\end{align}
Using the martingale property, the relative entropy
\begin{align*}
    & \mathcal{H}(\mathcal{P}_{[0,T]} | \mathcal{P}^R_{[0,T]}) \nonumber\\
    = & -\mathbb{E}^{\mathcal{P}_{[0,T]}} \left[ \log \left(\frac{\mathcal{P}^R_{[0,T]}}{\mathcal{P}_{[0,T]}}\right) \right] \nonumber \\
    =& \frac{\beta}{4}\int^{T}_0\int_{\mathbb{R}^n}(b(x)- b^{R}(x))^{T}A^{-1}(x)(b(x) - b^{R}(x))\rho_{ss}(x)dx dt  \\
    & + \int_0^T \int_{\mathbb{R}^n}\int_{\mathbb{R}^n\setminus\{0\}} \log \left( \frac{k(x + z, x)\, \rho_{ss}(x + z)}{k(x, x+z)\, \rho_{ss}(x)} \right) k(x,x+z)\rho_{ss}(x)dz dx dt\\
    & + \int_0^T \int_{\mathbb{R}^n} \int_{\mathbb{R}^n\setminus\{0\}} \left(\frac{k(x + z, x)\, \rho_{ss}(x + z)}{k(x,x+z)\, \rho_{ss}(x)}-1 \right) k(x,x+z) \rho_{ss}(x)dz dxdt  \\
    = & \beta\int_0^T\int_{\mathbb{R}^n} (b(x) - \beta^{-1} A(x)\nabla \log\rho_{ss}(x))^{T}A(x)(b(x) - \beta^{-1} A(x)\nabla \log\rho_{ss}(x)) \rho_{ss}(x)dxdt \\
    & +\frac{1}{2} \int_{\mathbb{R}^n}\int_{\mathbb{R}^n} [k(y, x)\rho_{ss}(x) - k(x,x) \rho_{ss}(x)]\log \left( \frac{\rho(t,x)k(x,y)}{\rho(t,y)k(y,x)} \right) dydx  \\
    & + \int_0^T \int_{\mathbb{R}^n}\int_{\mathbb{R}^n\setminus\{x\}} k(y, x)\rho_{ss}(x) - k(x,x) \rho_{ss}(x) dy dx dt.
\end{align*}
Since $\mathcal{L}^{\ast}\rho_{ss}=0$, the last term is $0$. Thus we conclude that
\begin{equation*}
    \mathcal{H}(\mathcal{P}_{[0,T]} | \mathcal{P}^R_{[0,T]}) = Te^{ss}_p,
\end{equation*}
where the entropy production rate $e^{ss}_{p}$ is consistent with \eqref{epr} with the stationary density $\rho_{ss}$.
\end{proof}

The Theorem \ref{eprpath} shows that if the stationary jump process is reversible (i.e., it satisfies detailed balance), the forward path measure is the same as its time-reversed path measure, i.e. $\mathcal{P}_{[0,T]} = \mathcal{P}^R_{[0,T]}$. In this case, the relative entropy is zero, and thus the entropy production rate is zero. This corresponds to a system in thermal equilibrium.
If the process is irreversible, $\mathcal{P}_{[0,T]} \neq \mathcal{P}^R_{[0,T]}$ leading to a positive relative entropy and thus a positive entropy production rate. The magnitude of entropy production rate quantifies how statistically distinguishable forward paths are from their time-reversed versions, indicating the strength of the irreversibility.

\section{Time-reversibility}\label{Sec5}

In this section, we discuss relations between time-reversibility and detailed balance for general jump diffusions. 
Let $(X_t)_{t\geq 0}$ be a stationary jump diffusion  satisfying the SDE \eqref{LeqX} with initial measure $\mu(dx)$. Since $(X_t)_{t\geq 0}$ is a time-homogeneous Markov process, we define the time-reversibility as follows.

\begin{defn}
The invariant measure $\mu$ is called a reversible (probability) measure of $(X_t)_{t\geq 0}$ if for every $f,g \in C_{b}(\mathbb{R}^n)$,
\begin{equation*}
    \mathbb{E}^{\mu}[f(X_t)g(X_0)]  = \mathbb{E}^{\mu}[f(X_0)g(X_t)].
\end{equation*}
\end{defn}

The connection between self-adjointness of the generator of an ergodic Markov process in $L^2(\mu)$ and the time-reversibility Markov process is established by Fukushima and Stroock \cite[Theorem 2.3]{FS86}.
This result shows that an invariant measure $\mu$ is reversible with respect to $(X_t)_{t\geq 0}$ if and only if the generator $\mathcal{L}$ is symmetric in $D(\mathcal{L}) \cap L^2(\mathbb{R}^n, \mu)$, i.e. for every $f,g \in D(\mathcal{L}) \cap L^2(\mathbb{R}^n, \mu)$,
\begin{equation*}
    \int_{\mathbb{R}^n} f(x) \mathcal{L}g(x) \mu(dx) = \int_{\mathbb{R}^n} g(x) \mathcal{L} f(x) \mu(dx).
\end{equation*}

Now we claim our second main result as follows.

\begin{thm}\label{TSeq}
Suppose that the assumptions in Section 2 hold.
Let $\mu(dx) = \rho_{ss}(x)dx = \frac{1}{Z} e^{-\beta V(x)}dx$ be the Gibbs measure associated with potential $V$.
Then the following are equivalent: \\
(i) The stationary process $(X_t)_{t\geq 0}$ with invariant density $\rho_{ss}$ is time-reversible.\\
(ii) The system $X_t$ with its invariant measure $\mu(dx)=\rho_{ss}(x)dx$ is detailed balance.\\
(iii) The drift $b(x) = -A(x)\nabla V(x)$, and the kernel
\begin{equation*}
    k(x,y) = e^{-\beta[V(y)-V(x)]/2} s(x,y),
\end{equation*}
where $s(x,y) = s(y,x)$ is a nonnegative measurable symmetric function from $\mathbb{R}^n \times \mathbb{R}^n$ to $\mathbb{R}$.\\
(iv) The entropy production rate vanishes at the stationary measure $\mu(dx) :=\rho_{ss}(x)dx$, i.e. $e^{ss}_{p} = 0$ .
\end{thm}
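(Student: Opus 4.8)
The plan is to establish the equivalences by a cycle of implications, using the structural formulas for the generator and the entropy production rate that have already been assembled. The natural route is (iii) $\Rightarrow$ (ii) $\Rightarrow$ (i) $\Rightarrow$ (iv) $\Rightarrow$ (iii), or alternatively to pivot everything through (ii), since detailed balance is the most directly checkable condition and it feeds cleanly into all the others. First I would prove (iii) $\Rightarrow$ (ii): assuming $b(x) = -A(x)\nabla V(x)$ and $k(x,y) = e^{-\beta[V(y)-V(x)]/2}s(x,y)$ with $s$ symmetric, I substitute $\rho_{ss}(x) = \frac1Z e^{-\beta V(x)}$ directly into the local current $j^{loc}(t,x) = b(x)\rho_{ss}(x) - \beta^{-1}A(x)\nabla\rho_{ss}(x)$ and verify it vanishes, since $\nabla\rho_{ss} = -\beta(\nabla V)\rho_{ss}$; and into the nonlocal current $j^{nl}(x,y) = \rho_{ss}(x)k(x,y) - \rho_{ss}(y)k(y,x)$, where the exponential factors recombine as $e^{-\beta V(x)}e^{-\beta[V(y)-V(x)]/2} = e^{-\beta[V(x)+V(y)]/2}$, which is symmetric in $x,y$, so the symmetry of $s$ forces $j^{nl} = 0$. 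This also confirms $\rho_{ss}$ is indeed stationary, i.e. $\mathcal L^\ast\rho_{ss} = 0$.

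Next, (ii) $\Rightarrow$ (i): detailed balance means $j^{loc} \equiv 0$ and $j^{nl} \equiv 0$ at $\rho_{ss}$. I would show this makes the generator $\mathcal L$ symmetric in $L^2(\mathbb{R}^n,\mu)$ and then invoke the Fukushima--Stroock criterion (\cite[Theorem 2.3]{FS86}) quoted just above the theorem statement to conclude time-reversibility. Symmetry of the diffusion part follows from $b = -A\nabla V$ by the usual integration by parts: $\int f\,\nabla\cdot(-b g + \beta^{-1}A\nabla g)\,\mu\,dx$ becomes, after writing $\mu = \rho_{ss}dx$ and using $b\rho_{ss} = \beta^{-1}A\nabla\rho_{ss}$, a manifestly symmetric Dirichlet form $-\beta^{-1}\int (\nabla f)^T A (\nabla g)\,\rho_{ss}\,dx$. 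Symmetry of the jump part follows from $\rho_{ss}(x)k(x,y) = \rho_{ss}(y)k(y,x)$: the bilinear form $\iint [f(y)-f(x)]k(x,y)g(x)\rho_{ss}(x)\,dy\,dx$ symmetrizes under $x\leftrightarrow y$ precisely because of this identity. For (i) $\Rightarrow$ (iv), reversibility gives $\mathcal{P}_{[0,T]} = \mathcal{P}^R_{[0,T]}$ (the time-reversed process has the same law as the forward one, which is exactly what Lemma \ref{TRformula} combined with reversibility yields), so the relative entropy in Theorem \ref{eprpath} vanishes, whence $e^{ss}_p = 0$.

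Finally, (iv) $\Rightarrow$ (iii) is where the real work lies. From the entropy production formula \eqref{epr} evaluated at $\rho_{ss}$, $e^{ss}_p = 0$ forces both nonnegative pieces to vanish separately (as in the proof of Theorem \ref{SecondL}): the local piece gives $A^{-1}(x)b(x) = \beta^{-1}\nabla\log\rho_{ss}(x) = -\nabla V(x)$, i.e. $b(x) = -A(x)\nabla V(x)$; and the nonlocal piece gives $\rho_{ss}(x)k(x,y) = \rho_{ss}(y)k(y,x)$ for a.e. $x,y$. Then I define $s(x,y) := e^{\beta[V(y)-V(x)]/2}k(x,y)$ and check from the balance identity that $s(x,y) = e^{\beta V(x)}\rho_{ss}(x)k(x,y)\cdot e^{-\beta[V(x)+V(y)]/2}\cdot(\text{const})$ is symmetric: concretely, $\rho_{ss}(x)k(x,y) = \rho_{ss}(y)k(y,x)$ rearranges to $e^{-\beta V(x)}k(x,y) = e^{-\beta V(y)}k(y,x)$, so $e^{-\beta[V(x)+V(y)]/2}\big(e^{\beta V(y)/2}k(x,y)e^{-\beta V(x)/2}\big)$ hmm — more directly, multiply the balance identity by $e^{\beta[V(x)+V(y)]/2}$ to get $e^{\beta[V(y)-V(x)]/2}k(x,y) = e^{\beta[V(x)-V(y)]/2}k(y,x)$, which is exactly $s(x,y) = s(y,x)$. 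Nonnegativity and measurability of $s$ are inherited from $k$.

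The main obstacle is the subtlety in step (iv) $\Rightarrow$ (iii): one must justify that $e^{ss}_p = 0$ genuinely forces \emph{pointwise} (a.e.) vanishing of the integrands, not merely of the integrals. The local term is a nonnegative quadratic form integrated against the positive density $\rho_{ss}$, so that is immediate; but the nonlocal term is an integral of $(\rho_{ss}(x)k(x,y) - \rho_{ss}(y)k(y,x))\log\frac{\rho_{ss}(x)k(x,y)}{\rho_{ss}(y)k(y,x)}$, whose integrand is nonnegative (both factors share a sign, by the argument in Theorem \ref{SecondL}), hence zero integral forces the integrand to vanish a.e., and since each factor vanishes exactly when $j^{nl} = 0$, we get $\rho_{ss}(x)k(x,y) = \rho_{ss}(y)k(y,x)$ a.e. The regularity assumption (E) (boundedness of the log-ratio) and the $C^2$ smoothness of $\rho_{ss}$ are what let one upgrade "a.e." to "everywhere" and pass freely between the various forms; I would invoke these where needed rather than belabor the measure-theoretic details. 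A secondary point is being careful that the Helmholtz--Hodge decomposition $A^{-1}b = -\nabla V + f^{loc}$ is consistent with $V$ being the \emph{same} potential appearing in $\mu = \frac1Z e^{-\beta V}dx$; under (iv) one finds $f^{loc} = 0$ and $f^{nl} = 0$, so this consistency is automatic, but it should be stated.
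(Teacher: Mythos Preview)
Your proof is correct but routes the implications differently from the paper. The paper proves the cycle (i)$\Rightarrow$(ii)$\Rightarrow$(iii)$\Rightarrow$(i) together with (ii)$\Leftrightarrow$(iv) via Theorem~\ref{SecondL}; its substantive step is (i)$\Rightarrow$(ii), where from the $L^2(\mu)$-symmetry of $\mathcal L=\mathcal L_1+\mathcal L_2$ (Fukushima--Stroock) it argues that the local part $\mathcal L_1$ and the nonlocal part $\mathcal L_2$ must be \emph{separately} symmetric---otherwise their antisymmetric defects $\bar{\mathcal L}_1$, $\bar{\mathcal L}_2$ would have to coincide as operators on $L^2(\mu)$, contradicting that one is a differential operator and the other a genuinely nonlocal integral operator---and then reads off $j^{loc}=0$, $j^{nl}=0$ from the resulting bilinear-form identities by testing against arbitrary $f,g$. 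You instead bypass this by going (i)$\Rightarrow$(iv) through Theorem~\ref{eprpath} (reversibility forces $\mathcal P_{[0,T]}=\mathcal P^R_{[0,T]}$, hence zero relative entropy, hence $e_p^{ss}=0$) and then (iv)$\Rightarrow$(iii) from the nonnegativity of the two summands in \eqref{epr}. Your route is cleaner and recycles already-proved results, avoiding the paper's somewhat informal ``differential $\neq$ nonlocal'' separation; the cost is that Theorem~\ref{eprpath} carries an extra integrability hypothesis on $\rho_{ss}$ (the uniform bound on $\int 1\wedge|y-x|^2\,\frac{\rho_{ss}(y)}{\rho_{ss}(x)}k(y,x)\,dy$) which is not literally among assumptions (A)--(E), so you should either note that it follows from (C) and (E), or supply a direct (i)$\Rightarrow$(ii) argument as the paper does. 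The remaining steps---your (iii)$\Rightarrow$(ii), (ii)$\Rightarrow$(i), and (iv)$\Rightarrow$(iii)---match the paper's corresponding computations essentially verbatim.
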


\begin{proof}
(i)$ \Rightarrow (ii)$. The reversibility of $X_{t}$ implies that for every $f,g \in D(\mathcal{L}) \cap L^2(\mathbb{R}^n, \mu)$,
\begin{align*}
    & \langle \mathcal{L}f,g\rangle_{L^2(\mathbb{R}^n, \mu)} -\langle \mathcal{L}g,f\rangle_{L^2(\mathbb{R}^n, \mu)}  \\
    = &  \langle \mathcal{L}_{1}f,g\rangle_{L^2(\mathbb{R}^n, \mu)}-\langle \mathcal{L}_{1}g,f\rangle_{L^2(\mathbb{R}^n, \mu)} +\langle \mathcal{L}_{2}f,g\rangle_{L^2(\mathbb{R}^n, \mu)}  - \langle \mathcal{L}_{2}g,f\rangle_{L^2(\mathbb{R}^n, \mu)} \\
    = &0.
\end{align*}
where $\mathcal{L}_{1}$ is the local part of the generator $\mathcal{L}$, and $\mathcal{L}_{2}$ is the nonlocal part of the generator $\mathcal{L}$.
Using integration by part, we have
\begin{align*}
    & \langle\mathcal{L}_{1}f,g\rangle_{L^2(\mathbb{R}^n, \mu)}-\langle \mathcal{L}_{1}g,f\rangle_{L^2(\mathbb{R}^n, \mu)} \\
    = & \int_{\mathbb{R}^n} ( b(x)\cdot \nabla f(x) + \beta^{-1} \nabla\cdot (A(x)\nabla f(x))) g(x) \rho_{ss}(x)dx \\
    & - \int_{\mathbb{R}^n} ( b(x)\cdot \nabla g(x) + \beta^{-1} \nabla\cdot (A(x)\nabla g(x))) f(x) \rho_{ss}(x)dx \\
    = & \int_{\mathbb{R}^n} \left\langle b(x) - \beta^{-1}\frac{A(x)\nabla\rho_{ss}(x)}{\rho_{ss}(x)}, \nabla f(x)g(x) - \nabla g(x) f(x) \right\rangle \rho_{ss}(x) dx \\
    = & \int_{\mathbb{R}^n} \langle b(x) - \beta^{-1}A(x)\nabla \log \rho_{ss}(x), \nabla f(x)g(x) - \nabla g(x) f(x)\rangle \rho_{ss}(x) dx.
\end{align*}
For nonlocal term, we have
\begin{align*}
    & \langle \mathcal{L}_{2}f,g\rangle_{L^2(\mathbb{R}^n, \mu)}- \langle \mathcal{L}_{2}g,f\rangle_{L^2(\mathbb{R}^n, \mu)}  \\
    = & \int_{\mathbb{R}^n} \int_{\mathbb{R}^{n}\setminus \{x\}} [f(y) - f(x)] k(x,y)dy g(x)\rho_{ss}(x)dx \\
    & \quad - \int_{\mathbb{R}^n} \int_{\mathbb{R}^{n}\setminus \{x\}} [g(y) - g(x)] k(x,y)dy f(x)\rho_{ss}(x)dx \\
    = & \int_{\mathbb{R}^n} \int_{\mathbb{R}^{n}\setminus \{x\}} [f(y)g(x) - g(y)f(x)] k(x,y)dy \rho_{ss}(x)dx \\
    = & \int_{\mathbb{R}^n} \int_{\mathbb{R}^{n}\setminus \{x\}} f(y)g(x)  [ k(x,y)\rho_{ss}(x) - k(y,x)\rho_{ss}(y)]dy dx.
\end{align*}
Now we show that for every $f,g \in D(\mathcal{L}) \cap L^2(\mathbb{R}^n, \mu)$,
\begin{equation}\label{EqL1L2}
    \langle \mathcal{L}_{1}g,f\rangle_{L^2(\mathbb{R}^n, \mu)} - \langle \mathcal{L}_{1}f,g\rangle_{L^2(\mathbb{R}^n, \mu)} = \langle \mathcal{L}_{2}g,f\rangle_{L^2(\mathbb{R}^n, \mu)} - \langle \mathcal{L}_{2}f,g\rangle_{L^2(\mathbb{R}^n, \mu)}  = 0
\end{equation}
If for every $f,g\in D(\mathcal{L}) \cap L^2(\mathbb{R}^n, \mu)$, 
\begin{equation*}
    \langle \mathcal{L}_{1}g,f\rangle_{L^2(\mathbb{R}^n, \mu)} - \langle \mathcal{L}_{1}f,g\rangle_{L^2(\mathbb{R}^n, \mu)} \neq 0, \quad \langle \mathcal{L}_{2}g,f\rangle_{L^2(\mathbb{R}^n, \mu)} - \langle \mathcal{L}_{2}f,g\rangle_{L^2(\mathbb{R}^n, \mu)} \neq 0
\end{equation*}
and 
\begin{equation}\label{EqdiffL12}
    \langle \mathcal{L}g,f\rangle_{L^2(\mathbb{R}^n, \mu)} - \langle \mathcal{L}f,g\rangle_{L^2(\mathbb{R}^n, \mu)} = 0
\end{equation}
then there exists a differential operator $\Bar{\mathcal{L}}_1$ and a nonlocal integral operator $\Bar{\mathcal{L}}_2$, so that for every $f,g\in D(\mathcal{L}) \cap L^2(\mathbb{R}^n, \mu)$,
\begin{equation*}
    \langle \mathcal{L}_{1}g,f\rangle_{L^2(\mathbb{R}^n, \mu)} = \langle \mathcal{L}^{\ast}_{1\mu}f,g\rangle_{L^2(\mathbb{R}^n, \mu)} = \langle (\mathcal{L}_{1} + \Bar{\mathcal{L}}_1)f,g\rangle_{L^2(\mathbb{R}^n, \mu)}
\end{equation*}
and 
\begin{equation*}
    \langle \mathcal{L}_{2}g,f\rangle_{L^2(\mathbb{R}^n, \mu)} = \langle \mathcal{L}^{\ast}_{2\mu}f,g\rangle_{L^2(\mathbb{R}^n, \mu)} = \langle (\mathcal{L}_{2} + \Bar{\mathcal{L}}_2)f,g\rangle_{L^2(\mathbb{R}^n, \mu)},
\end{equation*}
where $\mathcal{L}^{\ast}_{1\mu}$, $\mathcal{L}^{\ast}_{2\mu}$ are adjoint operators of $\mathcal{L}_1$, $\mathcal{L}_2$ with
respect to $L^2(\mathbb{R}^n, \mu)$.
Then by \eqref{EqdiffL12}, we have
\begin{equation*}
    \langle \Bar{\mathcal{L}}_1 f,g\rangle_{L^2(\mathbb{R}^n, \mu)} = \langle \Bar{\mathcal{L}}_2f,g\rangle_{L^2(\mathbb{R}^n, \mu)}
\end{equation*}
holds for every $f,g\in D(\mathcal{L}) \cap L^2(\mathbb{R}^n, \mu)$.
And it follows that $\Bar{\mathcal{L}}_1 = \Bar{\mathcal{L}}_2$.
However, it is impossible since $\Bar{\mathcal{L}}_1$ is a differential operator, and $\Bar{\mathcal{L}}_2$ is a nonlocal operator.
Thus we can exclude the case $\langle \mathcal{L}_{1}g,f\rangle_{L^2(\mathbb{R}^n, \mu)} - \langle \mathcal{L}_{1}f,g\rangle_{L^2(\mathbb{R}^n, \mu)} \neq 0$,  $\langle \mathcal{L}_{2}g,f\rangle_{L^2(\mathbb{R}^n, \mu)} - \langle \mathcal{L}_{2}f,g\rangle_{L^2(\mathbb{R}^n, \mu)} \neq 0$ and $\langle \mathcal{L}g,f\rangle_{L^2(\mathbb{R}^n, \mu)} - \langle \mathcal{L}f,g\rangle_{L^2(\mathbb{R}^n, \mu)} = 0$, and we obtain that for every $f,g \in D(\mathcal{L})$, \eqref{EqL1L2} holds. 

Furthermore, Since $f,g \in D(\mathcal{L})$ are arbitrary, we have for every $x,y \in \mathbb{R}^n$,
\begin{equation}
    j^{nl}(x,y)= \rho_{ss}(x)k(x,y) - \rho_{ss}(y)k(y,x) = 0.
\end{equation}
and
\begin{equation*}
    j^{loc}(t,x) := (b(x) - \beta^{-1} A(x)\nabla \log \rho_{ss}(x))\rho_{ss}(x) = 0.
\end{equation*}
This implies that the system $X_t$ is detailed balance at $\rho_{ss}$.\\
(ii) $\Rightarrow$ (iii). If the system $X_t$ is detailed balance at $\rho_{ss}(x) = Z^{-1} e^{-\beta V(x)}$, then 
\begin{equation*}
    j^{loc}(t,x) = (b(x) - \beta^{-1}A(x) \nabla \log\rho_{ss}(x))\rho_{ss}(x) = 0.
\end{equation*}
Thus the drift $b(x) = -A(x)\nabla V(x)$. 
Moreover, since 
\begin{equation*}
    j^{nl}(t,x,y) =e^{-\beta V(x)}k(x,y) - e^{-\beta V(y)}k(y,x) =0,
\end{equation*}
we have $k(x,y)/k(y,x) = e^{\beta(V(x) - V(y))}$.
We denote the nonnegative measurable symmetric function $s(x,y) = [k(x,y)k(y,x)]^{1/2}$.
Thus the kernel
\begin{equation*}
    k(x,y) = e^{-\beta[V(y)-V(x)]/2} s(x,y).
\end{equation*}
(iii) $\Rightarrow$ (i). If $\rho_{ss}(x) = Z^{-1} e^{-V(x)}$, $b = -A\nabla V$, and the kernel
\begin{equation*}
    k(x,y) = e^{-\beta[V(y)-V(x)]/2} s(x,y),
\end{equation*}
then for every $x,y \in \mathbb{R}^n$,
\begin{equation}
    \rho_{ss}(x)k(x,y) - \rho_{ss}(y)k(y,x) = 0
\end{equation}
and
\begin{equation*}
    b(x)\rho_{ss}(x) - \beta^{-1} A(x)\nabla \log \rho_{ss}(x) = 0.
\end{equation*}
For every $f,g \in D(\mathcal{L}) \cap L^2(\mathbb{R}^n, \mu)$, we can verify
\begin{align*}
    & \int_{\mathbb{R}^n} f(x) \mathcal{L}_{1}g(x) \rho_{ss}(x)dx \\
    = & \int_{\mathbb{R}^n} g(x) \mathcal{L}_{1}^{\ast}(f(x) \rho_{ss}(x))dx \\
    = & \int_{\mathbb{R}^n} g(x) \mathcal{L}_{1}f(x) \rho_{ss}(x) + g(x)\langle \nabla f(x), b(x)\rho_{ss}(x) - \beta^{-1} \nabla A(x) \rho_{ss}(x) \rangle dx \\
     = & \int_{\mathbb{R}^n} g(x) \mathcal{L}_{1}f(x) \rho_{ss}(x)dx
\end{align*}
and
\begin{align*}
    \int_{\mathbb{R}^n} f(x) \mathcal{L}_{2}g(x) \rho_{ss}(x)dx = & \int_{\mathbb{R}^n} g(x) \int_{\mathbb{R}^{n}\setminus \{x\}}f(y) \rho_{ss}(y) k(y,x) - f(x) \rho_{ss}(x) k(x,y)dy)dx \\
    = &  \int_{\mathbb{R}^n} g(x) \int_{\mathbb{R}^{n}\setminus \{x\}} [f(y) - f(x)] k(x,y)dy\rho_{ss}(x) dx  \\
     = & \int_{\mathbb{R}^n} g(x) \mathcal{L}_{2}f(x) \rho_{ss}(x)dx.
\end{align*}
Therefore, we have
\begin{equation*}
    \int_{\mathbb{R}^n} f(x) \mathcal{L}g(x) \rho_{ss}(x)dx = \int_{\mathbb{R}^n} g(x) \mathcal{L} f(x) \rho_{ss}(x)dx.
\end{equation*}
This implies that the stationary process $X_t$ with Gibbs measure $\mu$ is time-reversible.\\
(ii) $\Leftrightarrow$ (iv). The equivalent between detailed balance and zero entropy production rate is shown in Theorem \ref{SecondL}.
\end{proof}

\section{Examples}\label{Sec6}

Theorem \ref{TSeq} implies that the time irreversibility of jump processes is coming from the non-symmetrical property of the generator $\mathcal{L}$ with respect to the weighted $L^2$ space $L^2(\mathbb{R}^n,\mu)$. This breaking of symmetry is the source of the entropy production rate to the jump processes. For time reversal jump processes with the Gibbs measure $\mu(dx) = \frac{1}{Z}e^{-V(x)}dx$, some (nonlocal) gradient structures of the potential $V$ are appearing in the associated SDEs.
By Theorem \ref{TSeq}, we construct some examples of time-reversible/time-irreversible stationary jump process.

\noindent\textbf{Example 1.} Consider the SDE
\begin{equation}\label{EX1SDE}
    dX_t = -\nabla V(X_t) dt +\sqrt{2} dB_t + \int_{\mathbb{R}^n\setminus \{0\}} (z-X_t)N(dt,dz),
\end{equation}
where $B_t$ is an $n$-dimensional Brownian motion, and $N_t$ is the Poisson random measure with L\'{e}vy measure $dt\nu(dz):= e^{-V(z)}dz$.

Then the jump diffusion has the generator
\begin{align*}
    \mathcal{L}f(x) = & -\nabla V(x) \cdot\nabla f(x) + \Delta f(x) + \int_{\mathbb{R}^n \setminus \{ x \} } \left[ f(z) - f(x)\right] e^{-V(z)}dz \\
    = & -\nabla V(x)\cdot\nabla f(x) + \Delta f(x) \\
    & + \int_{\mathbb{R}^n \setminus \{ x \} } \left[ f(z) - f(x)\right] \frac{1}{\sqrt{2\pi}} e^{-(V(z) - V(x))/2} e^{-(V(z) + V(x))/2} dz,
\end{align*}
which satisfies the condition in Theorem \ref{TSeq}, (iii).
Moreover, the adjoint of the generator $\mathcal{L}$ is
\begin{equation*}
    \mathcal{L}^{\ast}\rho(x) = \nabla\cdot[\nabla V(x) \rho(x)] + \Delta\rho(x) + C \int_{\mathbb{R}\setminus\{x\}} \rho(y) e^{-V(x)} - \rho(x) e^{-V(y)} dy.
\end{equation*}
Thus, we can check that the Gibbs distribution $\mu(dx):= \frac{1}{Z} e^{-V(x)}dx$ is an invariant measure of the jump process $X_t$, and the generator $\mathcal{L}$ is symmetric in $D(\mathcal{L}) \cap L^2(\mathbb{R}^n, \mu)$. With the invariant measure $\mu(dx) = \frac{1}{Z}e^{-V(x)}dx$, the jump diffusion $X_t$ satisfies the detailed balance:
\begin{equation*}
    j^{loc}_{ss}(x)= -\nabla V(x) Z^{-1}e^{-V(x)} +  \nabla V(x) Z^{-1}e^{-V(x)}= 0
\end{equation*}
and
\begin{equation*}
    j^{nl}_{ss}(x,y) = \frac{1}{Z}e^{-V(x)}e^{- V(y)} - \frac{1}{Z}e^{-V(y)}e^{-V(x)} = 0.
\end{equation*}
Thus the above jump diffusion $X_t$ has zero entropy production rate when $X_t \overset{\text d}{=} \mu(dx)$.
By Theorem \ref{TSeq}, the invariant measure $\mu(dx):= \frac{1}{Z} e^{-V(x)}dx$ is also a reversible measure of jump diffusion $X_t$. 


Now we choose the potential $V(x) = |x|^2/2$, initial distribution $X_0 \overset{\text d}{=} \frac{1}{\sqrt{2 \pi }} e^{-|x-3|^2/2}  dx$, and dimension $n=1$ in the SDE \eqref{EX1SDE} as in our numerical example. The associated  equilibrium state is given by the one-dimensional standard Gaussian distribution: $\mu(dx) = \frac{1}{\sqrt{2 \pi }} e^{-|x|^2/2}  dx$.
Figure \ref{fig:Ex1} illustrates the evolution of the density and the entropy production rates for this example.  In this case, entropy production rates eventually decay to zero, and the system relaxes to equilibrium state.\\
\begin{figure}[htbp]
    \centering
    \begin{subfigure}[b]{0.47\textwidth}
        \centering
        \includegraphics[width=0.9\textwidth, trim=20 0 20 500, clip]{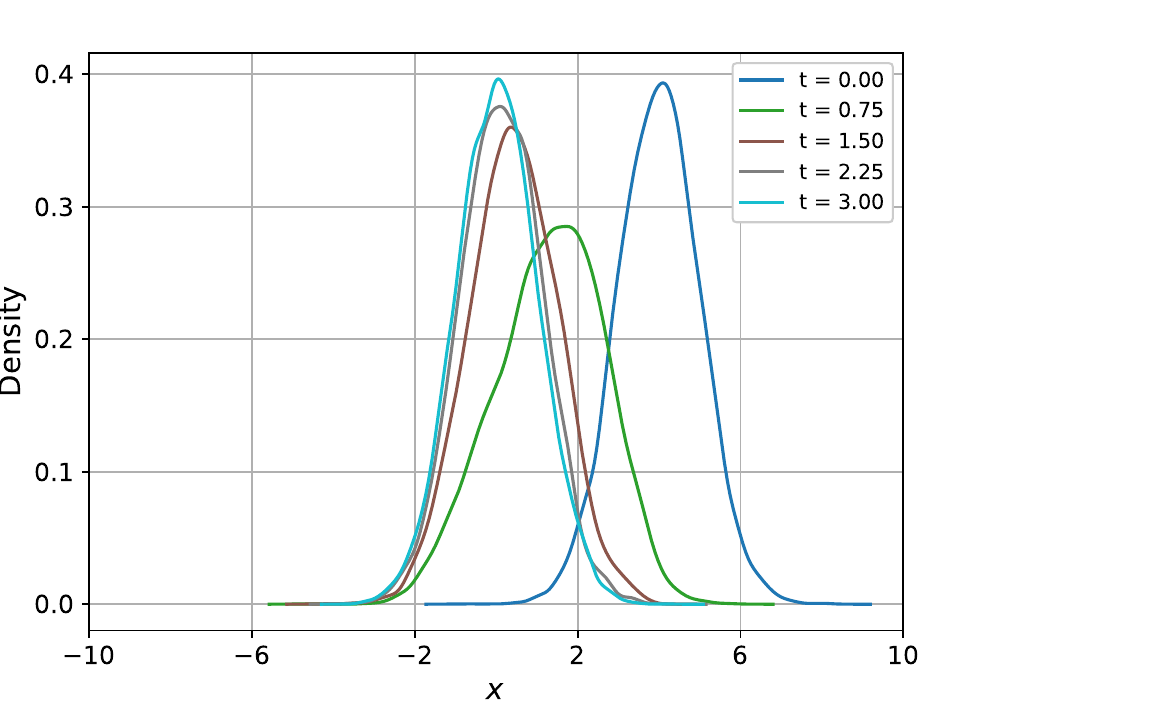}
        \caption{The evolution of the density.}
        \label{fig:Ex1_density}
    \end{subfigure}
    \hfill
    \begin{subfigure}[b]{0.47\textwidth}
        \centering
        \includegraphics[width=0.9\textwidth,trim=10 0 10 500, clip]{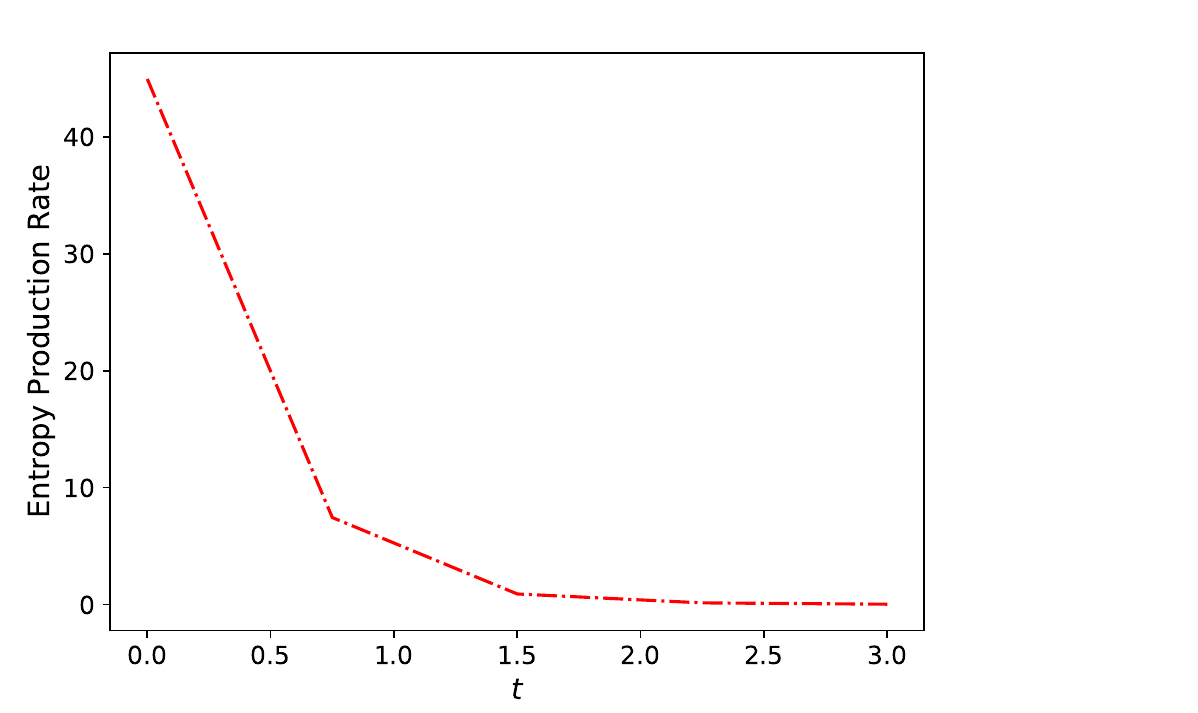}
        \caption{The evolution of the entropy production rate.}
        \label{fig:Ex1_EPR}
    \end{subfigure}
    \caption{The system in Example 1.}
    \label{fig:Ex1}
    \end{figure}

\noindent\textbf{Example 2}
Consider the following one-dimensional jump-diffusion process:
\begin{equation}\label{EX2SDE}
dX_t = -X_t\,dt + dL^{\alpha}_t,
\end{equation}
where $L^{\alpha}_t$ is a one-dimensional $\alpha$-stable Lévy process with Lévy process $\nu(dz) = C_{1,\alpha}|z|^{-(1+\alpha)}dz$. 
Then the above jump diffusion $X_t$ has generator
\begin{align*}
  \mathcal{L}f(x) = & -x f'(x) + \int_{\mathbb{R}\setminus\{0\}} [f(x+z) - f(x)]\frac{C_{n,\alpha}}{|z|^{1+\alpha}}dz \\
  = & -x f'(x) + \int_{\mathbb{R} \setminus \{x\}} [f(y) - f(x)] k(y,x)dy,
\end{align*}
where $k(x,y) = C_{n,\alpha}|x-y|^{-(1+\alpha)}$.\\
By \cite[Proposition 3.2]{ARW00}, this process is known to admit the invariant measure $\mu$ with density:
\begin{equation}
\rho_{ss}(x) = \frac{1}{2\pi} \int_{\mathbb{R}} e^{i\xi x} e^{-\frac{1}{\alpha}|\xi|^{\alpha}}d\xi = \frac{1}{\pi} \int^{\infty}_0 \cos (x\xi) e^{-\frac{1}{\alpha}|\xi|^{\alpha}}d\xi.
\end{equation}
Let the initial measure be the above invariant measure $\mu$, so that the solution of the SDE \eqref{EX2SDE} is a stationary jump process.
We can verify that generator $\mathcal{L}$ is not symmetric in $D(\mathcal{L}) \cap L^1(\mathbb{R}^n, \mu)$. Thus the jump process $X_t$ with above invariant measure is time irreversible. With the above invariant measure, the nonlocal jump rate
\begin{equation*}
    j^{nl}_{ss}(x,y) = \rho_{ss}(x)k(x, y) - \rho_{ss}(y)k(y,x) \neq 0, \quad \text{a.s.}.
\end{equation*}
Then the log ratio in the formula of entropy production rate becomes:
$$ \log \left( \frac{\rho_{ss}(x)k(x, y)}{\rho_{ss}(y)k(y,x)} \right) \neq 0. $$
Moreover, there is no diffusion term in the SDE \eqref{EX2SDE}, i.e. $A(\cdot) = 0$. Thus the entropy production rate with stationary density $\rho_{ss}$ is given by
\begin{equation}
e^{ss}_p = \frac{1}{2} \int_{\mathbb{R}} \int_{\mathbb{R}\setminus\{x\}} (\rho_{ss}(x)k(x, y) - \rho_{ss}(y)k(y,x)) \log \left( \frac{\rho_{ss}(x)k(x, y)}{\rho_{ss}(y)k(y,x)} \right)dydx.
\end{equation}
Figure~\ref{fig:Ex2_density} illustrates the stationary densities and the associated entropy production rates for this example with different index $\alpha$. Figure~\ref{fig:Ex2_EPR} implies that the entropy production rate for the SDE \eqref{EX2SDE} stabilizes to a positive steady-state value. 
It implies that the jump process by the SDE \eqref{EX2SDE} with above invariant measure is time irreversible, which is consistent with Theorem \ref{TSeq}.

\begin{figure}[htbp]
    \centering
    \begin{subfigure}[b]{0.47\textwidth}
        \centering
        \includegraphics[width=0.9\textwidth, trim=0 400 0 0, clip]{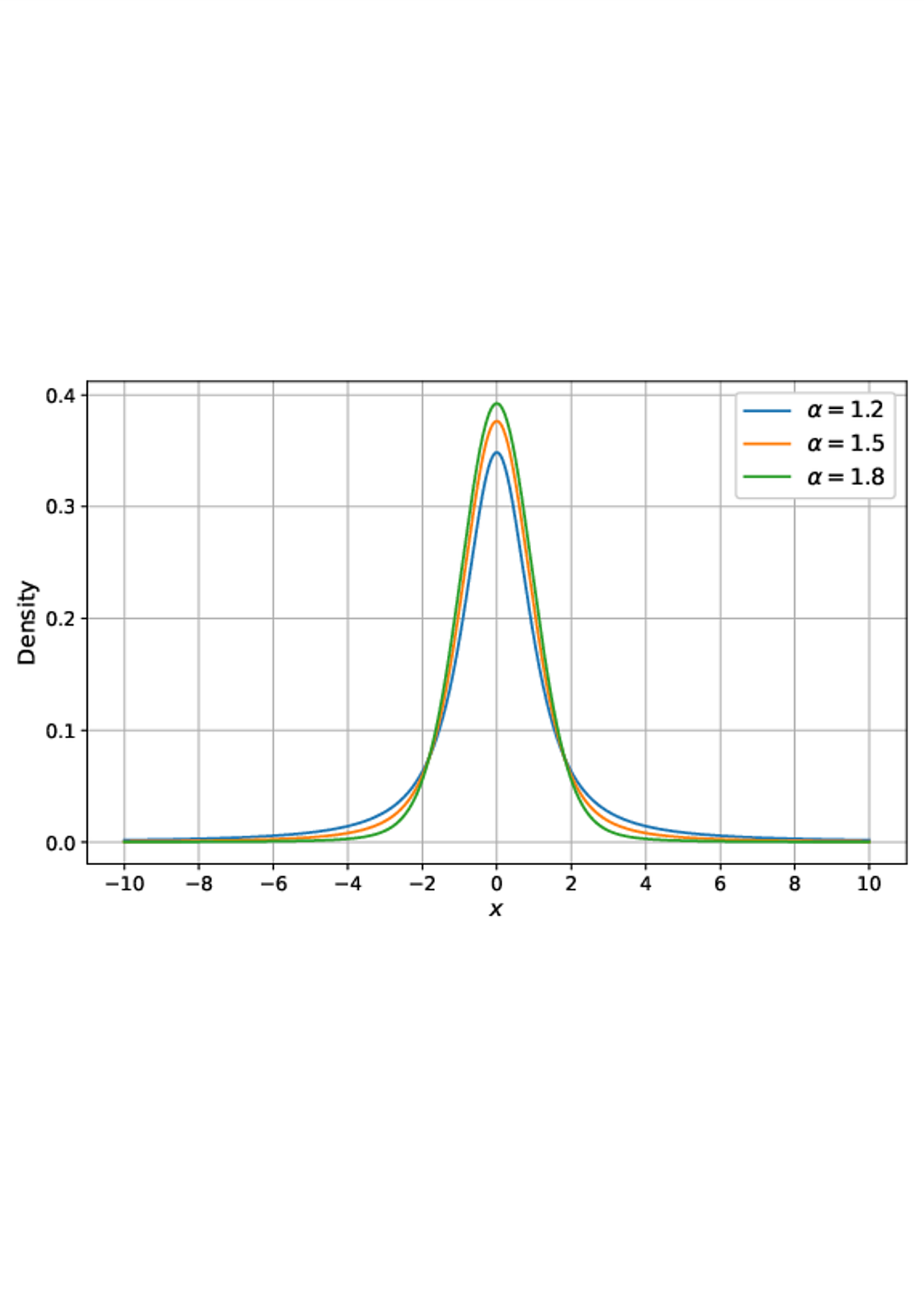}
        \caption{Stationary densities in Example 2.}
        \label{fig:Ex2_density}
    \end{subfigure}
    \hfill
    \begin{subfigure}[b]{0.47\textwidth}
        \centering
        \includegraphics[width=0.9\textwidth,trim=0 400 0 0, clip]{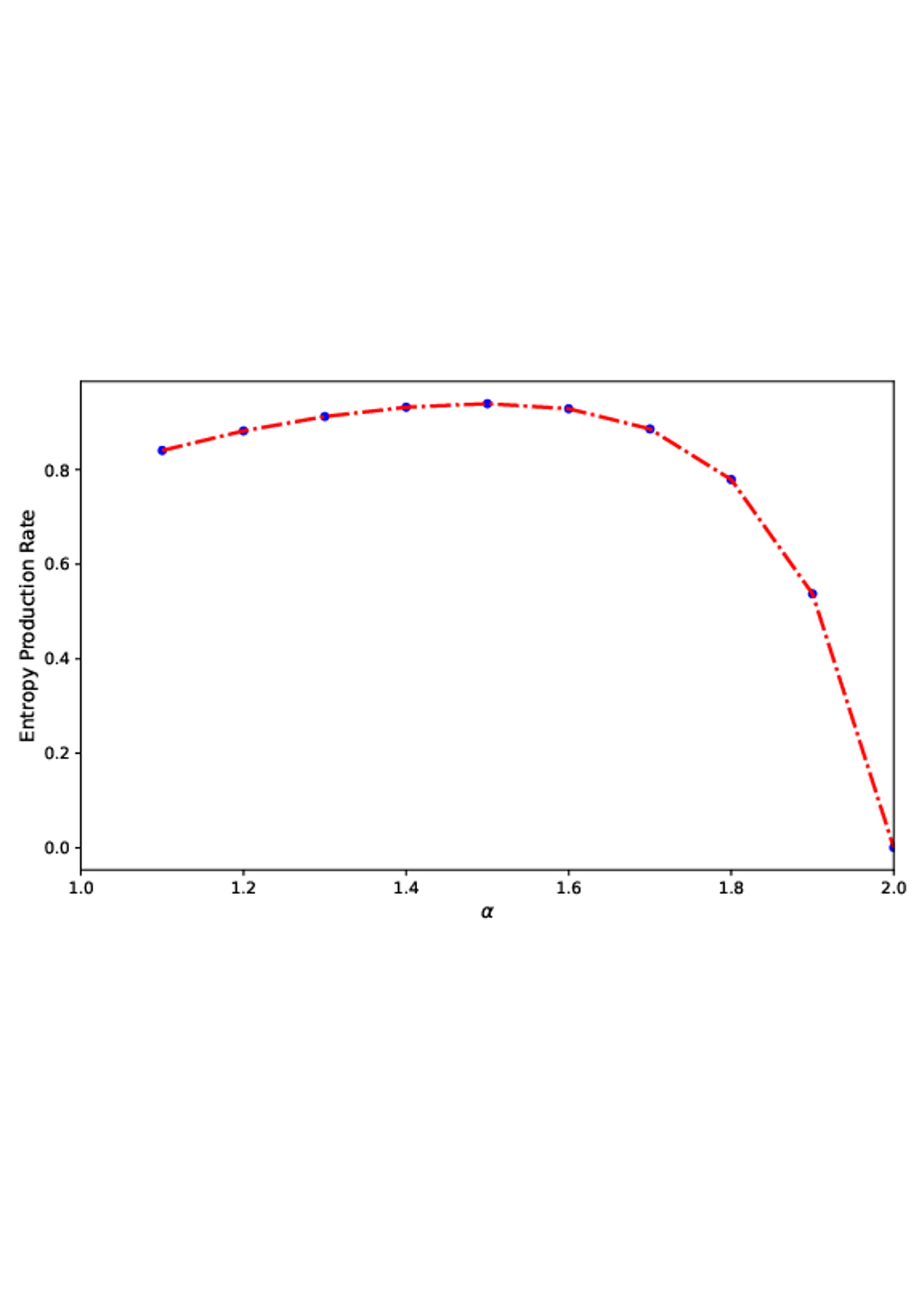}
        \caption{Entropy production rates for different $\alpha$.}
        \label{fig:Ex2_EPR}
    \end{subfigure}
    \caption{The system in Example 2.}
    \label{fig:Ex2}
    \end{figure}


\section{Conclusion}\label{Sec7}

We have developed a thermodynamic framework for general jump-diffusion processes on $\mathbb{R}^n$
Our primary contributions include deriving a mathematical formula for the entropy production rate, and establishing the associated Clausius inequality. We have shown that for a stationary process, this rate is equivalent to the relative entropy between the forward and time-reversed path measures, thereby quantifying the degree of time-irreversibility. Furthermore, we established a key theorem that reveals the equivalence between time-reversibility, zero entropy production, detailed balance, and the system's gradient structure, providing a comprehensive criterion for identifying equilibrium.

Our results are contingent on certain technical assumptions. These include Lipschitz, linear growth, and Lyapunov conditions to ensure the well-posedness of the SDE and the existence of a (stationary) invariant measure. In order to use time reversal and the Girsanov transform for the jump diffusion, we assume that some additional assumptions hold for the jump rate and the density of its invariant measure of the jump diffusion. Although standard, these assumptions may limit the framework's applicability to some physical systems.

This work provides new insights into non-equilibrium thermodynamic and jump diffusions. A natural extension of this work is to consider the associated entropy production fluctuation theorem \cite{MN03} for jump diffusions, which is valid for systems driven arbitrarily far-from-equilibrium. 
Another possible extension is to construct the Onsager reciprocal relations with broken time-reversal symmetry via this entropy production rate formula for jump diffusions. 
Additionally, exploring the link between our entropy production framework and the deep learning for jump diffusion is also exciting direction, following \cite{BV24}.

\section*{Acknowledgments}
The authors thank Prof. Jingqiao Duan, Dr. Qiao Huang, and Dr. Shuyuan Fan, for their fruitful discussions on thermodynamics and stochastic processes.
QZ acknowledge support from the China Postdoctoral Science Foundation (Grant No. 2023M740331).
Particularly, the QZ would like to thank Shu Yang for her generous support and encourage.

\section*{Declarations}
\textbf{Conflict of interest }\quad The authors have no conflicts to disclose.

\section*{Data Availability}
Our manuscript has no associated data.

\vspace {.5cm}

\end{document}